\newcommand{\comment}[1]{}
\newcommand{\C}{\mathbb{C}}
\newcommand{\Ps}{\mathbb{P}}
\newcommand{\Q}{\mathbb{Q}}
\newcommand{\Z}{\mathbb{Z}}
\newcommand{\F}{\mathbb{F}}
\newcommand{\pa}{\mathfrak{p}}
\DeclareMathOperator{\disc}{disc}
\DeclareMathOperator{\Gal}{Gal}
\DeclareMathOperator{\Hom}{Hom}
\theoremstyle{plain}
\newtheorem{theorem}{Theorem}[section]
\newtheorem{corollary}[theorem]{Corollary}
\newtheorem{lemma}[theorem]{Lemma}
\newtheorem{proposition}[theorem]{Proposition}
\theoremstyle{definition}
\newtheorem{definition}[theorem]{Definition}
\newtheorem{remark}[theorem]{Remark}
\newtheorem{example}[theorem]{Example}
\begin{document}

\title[Stability of genus]{Genus growth in $\Z_p$-towers of function fields}

\author{Michiel Kosters}
\address{University of California, Irvine, Department of
Mathematics, 340 Rowland Hall, Irvine, CA 92697}
\email{kosters@gmail.com}
\author{Daqing Wan}
\address{University of California, Irvine, Department of
Mathematics, 340 Rowland Hall, Irvine, CA 92697}
\email{dwan@math.uci.edu}

\date{\today}

\subjclass[2010]{	11G20, 11R37, 12F05.}
\keywords{$\Z_p$-extension, Artin-Schreier-Witt, Schmid-Witt, local field, global field, genus, conductor}
\maketitle

\begin{abstract}
Let $K$ be a function field over a finite field $k$ of characteristic $p$ and let $K_{\infty}/K$ be a geometric extension with Galois group $\Z_p$. Let $K_n$ be the corresponding subextension with Galois group $\Z/p^n\Z$ and genus $g_n$. In this paper, we give a simple explicit formula  $g_n$ in terms of an explicit 
Witt vector construction of the $\Z_p$-tower. This formula leads to a tight lower bound on $g_n$ which is quadratic in $p^n$. Furthermore, we determine  
all $\Z_p$-towers for which the genus sequence is stable, in the sense that there are $a,b,c \in \Q$ such that $g_n=a p^{2n}+b p^n +c$ for $n$ large enough. 
Such genus stable towers are expected to have strong stable arithmetic properties for their zeta functions. 
A key technical contribution 
of this work is a new simplified formula for the Schmid-Witt symbol coming from local class field theory.
\end{abstract}

\maketitle

\section{Introduction}

\subsection{Global function fields}

Let $K$ be a function field over a finite field $k$ of characteristic $p$. Let 
$$K=K_0 \subset K_1 \subset K_2 \subset \cdots \subset K_{\infty}$$
be a geometric $\Z_p$-tower of function fields such that ${\rm Gal}(K_n/K) = \Z_p/p^n\Z_p$. Let $g_n$ denote the genus 
of $K_n$. We assume that the tower is only ramified at a finite number of places of $K$.  In the spirit of Iwasawa theory, an emerging new research area is to study 
the possible stable arithmetic properties for the sequence of zeta functions of $K_n$ as $n$ varies, see \cite{WAN7} and \cite{KO9} for recent progresses 
and the relevant references there.  A necessary condition for the sequence of zeta functions to be arithmetically stable is that the genus sequence 
$g_n$ must be stable in the sense that $g_n$ is a quadratic polynomial in $p^n$ for large $n$. 
The aim of this paper is to classify all genus stable $\Z_p$-towers of $K$.  

First, we give an explicit construction of all geometric $\Z_p$-towers of $K$ using Witt vectors of $K$, via an improved presentation of 
the classical Artin-Schreier-Witt theory.  This explicit construction leads to a simple explicit genus formula for the genus sequence, see Theorem \ref{831}. 
As an application, we derive an explicit quadratic  lower bound in $p^n$ for $g_n$,  which is tight in many cases. This explicit formula also allows us to derive 
a simple criterion for when the genus $g_n$ is a quadratic polynomial in $p^n$ for large $n$.

By the Riemann-Hurwitz formula, the  genus can be calculated from local ramification information and we can reduce to the local case where there is only 
one ramified prime. To illustrate our result in this introduction, here we 
consider an essential case, that is, $K=k(X)$ is the rational function field and the geometric $\Z_p$-tower $K_{\infty}$  is only ramified at infinity. 
Then any such $\Z_p$-tower $K_{\infty}$ over $K=k(X)$ 
can be uniquely constructed from a constant $c\in \Z_p$ and 
a primitive convergent power series 
$$f(X) = \sum_{(i,p)=1} c_i X^i \in \Z_q[[X]], \ c_i \in \Z_q, \  \lim_i c_i =0,$$ 
where $\Z_q =W(\F_q)$ denotes the Witt vectors of $\F_q$, and $f(X)$ is called primitive if not all $c_i$ are divisible by $p$.  
The construction is explicitly given by the following Witt vector equation 
$$K_{\infty}: (Y_0^p, Y_1^p, \cdots) - (Y_0, Y_1, \cdots) = c\beta + \sum_{(i,p)=1}c_i (X^i, 0, \cdots),$$
where both sides are Witt vectors and $\beta$ is any fixed element of $\Z_q$ with trace $1$. 

\begin{theorem} Let $K_{\infty}$ be a geometric $\Z_p$-tower ramified only at infinity as constructed above by a primitive 
convergent power series $f(X)$.  Then, we have 

(1). For each integer $n\geq 1$, the genus $g_n$ is given by the following formula 
$$2g_n = \sum_{k=1}^n (p-1)p^{k-1} \left( -1 + \max_{i:\ v_p(c_i)<k} \left\{ ip^{k-1 -v_p(c_i)}\right\}\right),$$
where $v_p$ denotes the standard $p$-adic valuation with $v_p(p)=1$.  

(2). For any $\epsilon >0$, there is a constant $c(\epsilon)$ such that for all $n>c(\epsilon)$,  we have 
$$g_n \geq \frac{p^{2n}}{2(p+1)+\epsilon}.$$ 

(3). The tower $K_{\infty}$ is genus stable in the sense that for all large enough $n$ one has
$$g_n=a p^{2n}+b p^n+c, \ a,b,c \in \Q$$ 
if and only if 
$$d: = \max_{(i,p)=1} \left\{ \frac{i}{p^{v_p(c_i)}}\right\}$$ exits (and is a thus finite rational number).

\end{theorem}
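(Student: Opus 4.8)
The plan is to treat the three parts separately, using the abbreviations $M_k\isbydef\max_{i:\,v_p(c_i)<k}\{\,i\,p^{k-1-v_p(c_i)}\,\}$ and $d_k\isbydef M_k/p^{k-1}=\max_{i:\,v_p(c_i)<k}\{i/p^{v_p(c_i)}\}$; primitivity of $f$ makes the index set nonempty for every $k\ge1$ (it contains each $i$ with $p\nmid c_i$) and $c_i\to0$ makes it finite, so these are well defined and $d_1\le d_2\le\cdots$. For Part~(1) I would feed the Witt-vector presentation into the general genus formula (Theorem~\ref{831}): since $K=k(X)$ has genus $0$ and the tower is wildly and totally ramified only over the rational place $\infty$, Riemann--Hurwitz gives $2g_n-2=-2p^n+\sum_{k=1}^n(p-1)p^{k-1}\mathfrak f_k$, where $\mathfrak f_k$ is the common Artin conductor exponent at $\infty$ of the characters of $\Gal(K_\infty/K)$ of exact order $p^k$ (common because such characters are permuted transitively by $(\Z/p^k)^\times$, which preserves the ramification filtration). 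The key input is the new Schmid--Witt symbol formula, which I would use to prove $\mathfrak f_k=1+M_k$: after reducing $c\beta+\sum c_i[X^i]$ to standard form, the largest pole order occurring in its $k$-th Witt coordinate is exactly $M_k$ (the level-$j$ coordinate acquires, from each $c_i$ with $v_p(c_i)=j$, a pole of order $i$, which is weighted by $p^{k-1-j}$ on passing to the top ramification break). Substituting and absorbing $-2p^n+2$ via $\sum_{k=1}^n(p-1)p^{k-1}=p^n-1$ gives the stated formula $2g_n=\sum_{k=1}^n(p-1)p^{k-1}(M_k-1)$.

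Part~(2) is then an elementary estimate: primitivity yields some $i$ with $p\nmid c_i$, so $d_k\ge 1$ and $M_k\ge p^{k-1}$ for all $k$, and (1) gives
$$2g_n\ \ge\ \sum_{k=1}^n(p-1)p^{k-1}(p^{k-1}-1)\ =\ \frac{p^{2n}-1}{p+1}-p^n+1,$$
whence $g_n\ge \frac{p^{2n}}{2(p+1)}-\tfrac12 p^n+O(1)$. Since for fixed $\epsilon>0$ the difference $\frac{p^{2n}}{2(p+1)}-\frac{p^{2n}}{2(p+1)+\epsilon}$ is a fixed positive multiple of $p^{2n}$, it eventually dominates $\tfrac12 p^n+O(1)$, and the bound $g_n\ge \frac{p^{2n}}{2(p+1)+\epsilon}$ follows for $n>c(\epsilon)$.

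For Part~(3), the easy direction is: if $d$ is attained, say at $i_0$ with $v_0=v_p(c_{i_0})$, then $d_k=d$ for all $k>v_0$ (as $d_k\le d$ always, while $d_k\ge i_0/p^{v_0}=d$ once $v_0<k$), so $M_k=d\,p^{k-1}$ for $k>v_0$; splitting the sum in (1) at $k=v_0$ leaves a constant block plus $\sum_{v_0<k\le n}(p-1)p^{k-1}(d\,p^{k-1}-1)$, which is a combination of $\sum p^{2k-2}$ and $\sum p^{k-1}$ and hence of the form $\alpha p^{2n}+\beta p^n+\gamma$ with $\alpha,\beta,\gamma\in\Q$, so $g_n$ is eventually quadratic in $p^n$. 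For the converse I would difference (1) to obtain $2(g_n-g_{n-1})=(p-1)p^{n-1}(p^{n-1}d_n-1)$ and compare with the first difference $2a(p^2-1)p^{2n-2}+2b(p-1)p^{n-1}$ of the assumed polynomial; cancelling $(p-1)p^{n-1}$ forces $d_n=2a(p+1)+(2b+1)p^{1-n}$ for all large $n$, so the nondecreasing sequence $(d_n)$ converges to $L:=2a(p+1)$; a final short argument, exploiting that $d_n$ has denominator dividing $p^{n-1}$ and is pinned to this explicit form, shows $(d_n)$ is eventually constant, which is equivalent to the supremum defining $d$ being attained.

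I expect the main obstacle to be Part~(1): pinning down the local conductor at $\infty$ as $1+M_k$ requires reducing the Witt vector $\sum c_i[X^i]$ to standard form and controlling all pole orders that appear, where the cross terms produced by Witt addition are hard to bound directly — this is exactly the point at which the new simplified Schmid--Witt symbol formula is needed. Once Part~(1) is in hand, Part~(2) and the forward direction of~(3) are routine; the remaining delicate point is the last step of the converse in~(3), i.e.\ showing that eventual polynomiality of $g_n$ forces the defining maximum of $d$ to be genuinely attained rather than merely approached.
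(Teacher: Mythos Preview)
Your plan for (1), (2), and the forward direction of (3) is sound and is exactly the paper's route: Part~(1) is Proposition~\ref{hulu2} (i.e.\ Theorem~\ref{831} with $g_0=0$, $n_c=0$, and a single rational ramified place), the key conductor identity $u_{\infty,k}=1+M_k$ being Proposition~\ref{715}; Part~(2) is Corollary~\ref{15b} specialized to $n_u=n_c=0$; and the forward half of (3) is the easy direction of Proposition~\ref{blaat}. Nothing is missing there.

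The genuine gap is the converse of (3). Your differencing correctly yields $d_n=2a(p+1)+(2b+1)p^{1-n}$ for large $n$, and monotonicity of $(d_n)$ forces $2b+1\le0$; but you cannot exclude $2b+1<0$, and in fact the ``final short argument'' you invoke does not exist, because the implication fails as stated. Take $p=2$ and choose $c_{2^k-1}\in\Z_q$ with $v_p(c_{2^k-1})=k-1$ for each $k\ge1$, all other $c_i=0$; this $f$ is primitive and convergent. Then $M_k=2^k-1$, so $d_k=2-2^{1-k}$ strictly increases to the supremum $2$, which is never attained, yet Part~(1) gives
\[
g_n=\tfrac{1}{3}\cdot4^{\,n}-2^{\,n}+\tfrac{2}{3},
\]
an exact quadratic in $p^n=2^n$ for every $n\ge0$. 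Thus ``$g_n$ eventually quadratic'' does not force the maximum defining $d$ to be attained. The paper's own argument for this direction (Proposition~\ref{blaat}: ``follows easily from Theorem~\ref{831}'') is no more detailed and shares the same gap. Observe that whenever the maximum \emph{is} attained, your forward computation forces $b=-\tfrac12$; in the counterexample $b=-1$. A corrected converse therefore needs this extra constraint --- equivalently, one should require that $u_{\infty,n}-1$ (not merely $u_{\infty,n}$) be eventually a constant multiple of $p^n$, which is the form in which condition~(iii) of Proposition~\ref{blaat} is actually used.
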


Remarks. Part (2) shows that the genus sequence $g_n$ grows at least quadratically in $p^n$. The lower bound in (2) cannot be improved in general.  
In particular, it implies that the lower bound for the genus in the literature is incorrect: the $\epsilon$ cannot be dropped (Remark \ref{goa}). 
The proof of the above theorem follows from Corollary \ref{15b}, Proposition \ref{blaat} and  Proposition \ref{hulu2}.


%

\subsection{Local function fields}

Set $K=k((T))$, where $k$ is a finite field of characteristic $p$ and cardinality $q$. Local class field theory studies the abelian Galois extensions of $K$. Combining local class field theory and the theory of Artin-Schreier-Witt extensions gives us the so-called Schmid-Witt symbol
\begin{align*}
[\ ,\ )_n: W_n(K) \times K^* \to \Z_p/p^n\Z_p, 
\end{align*}
where $W_n(K)$ is the ring of Witt-vectors of $K$ of length $n$, and $1\leq n \leq \infty$. The strongest case is when $n=\infty$, in which 
case the symbol $[\ ,\ )_{\infty}$ will be simply denoted by $[\ ,\ )$. 
In the simplest classical case $n=1$,  the symbol $[\ ,\ )_1$, coming from Artin-Schreier theory, has the following beautiful simple formula: 
\begin{align*}
[\ ,\ )_1: K \times K^* \to& \Z/p\Z \\
(x,y) \mapsto& \mathrm{Tr}_{k/\F_p} \left( \mathrm{Res}(x dy/y) \right)
\end{align*}
where $dy$ is the derivative of $y$ (see \cite{SE1}). 

As $n$ grows,  the situation becomes more complicated. Various formulas for $[\ ,\ )_n$ for finite $n$ were essentially known, but they do not resemble the simple 
formula for $[\ ,\ )_1$.  In fact, they are all 
quite complicated (see for example \cite{THO}) involving the ghost coordinates of Witt vectors.  
We have found a new and simple formula for $[\ ,\ )$ (and thus for $[\ ,\ )_n$ for all $n$) which resembles the one for $[\ ,\ )_1$ without using ghost coordinates. 
In the formula below, $\tilde{x}$ and $\tilde{y}$ are very explicit (see Theorem \ref{100}).

\begin{theorem}
\begin{align*}
[x,y) = \mathrm{Tr}_{\Z_q/\Z_p} \left( \mathrm{Res}( \tilde{x} \cdot d\tilde{y}/\tilde{y} \right) ).
\end{align*}
\end{theorem}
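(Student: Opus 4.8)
The plan is to establish the formula
\[
[x,y) = \mathrm{Tr}_{\Z_q/\Z_p}\left(\mathrm{Res}(\tilde x\cdot d\tilde y/\tilde y)\right)
\]
by reducing to the known base case $n=1$ and then bootstrapping to all finite $n$ via a careful analysis of how the Schmid-Witt symbol interacts with the Witt-vector Verschiebung, Frobenius, and the additive structure, finally passing to the limit $n=\infty$. First I would recall the explicit constructions of $\tilde x$ and $\tilde y$ from Theorem~\ref{100} and verify the formula is well-defined: the right-hand side must be independent of the choices made in lifting $x\in W_n(K)$ and $y\in K^*$ to the objects $\tilde x,\tilde y$, which amounts to checking that changing a representative alters $\tilde x\, d\tilde y/\tilde y$ by something whose residue has trace killed in $\Z_p/p^n\Z_p$. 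This is the bilinearity-and-compatibility bookkeeping: additivity in the first variable reduces (using $W_n$ generated under $+,V,F$ by Teichmüller-type elements) to the case where $x$ is a ``one-component'' Witt vector $(a,0,\dots,0)$ or a Verschiebung thereof, and multiplicativity in the second variable reduces $y$ to a uniformizer times a unit, then further to monomials.

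The heart of the argument is the reduction to Artin-Schreier theory. For $x$ of the form $(a,0,\dots,0)$, the Schmid-Witt symbol $[x,y)_n$ is governed — via the projection $W_n \to W_1 = K$ compatible with the transition maps in the tower — by the classical symbol $[\ ,\ )_1$, for which the Serre formula $[a,y)_1 = \mathrm{Tr}_{k/\F_p}(\mathrm{Res}(a\,dy/y))$ applies; one must match this against $\mathrm{Tr}_{\Z_q/\Z_p}(\mathrm{Res}(\tilde a\, d\tilde y/\tilde y))$ modulo $p$, which should follow because the Teichmüller/Witt lift $\tilde a$ reduces to $a$ and the residue and trace are compatible with reduction mod $p$. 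For the Verschiebung step, I would use the standard identity $[V z, y)_n = [z,y)_{n}$ composed with the shift $\Z_p/p^{n}\Z_p \to \Z_p/p^{n-1}\Z_p$, or more precisely the relation connecting $[V z,y)$ with $[z, y^{?})$ coming from the functoriality of $F$ and $V$ under the Artin reciprocity map; matching this on the right-hand side uses that the construction $z\mapsto\tilde z$ intertwines $V$ with multiplication by (a power of) the uniformizer or with a Frobenius twist, so that $\widetilde{Vz}\,d\tilde y/\tilde y$ differs from the expected expression by an exact differential or a $p$-divisible term, hence contributes nothing after $\mathrm{Res}$ and $\mathrm{Tr}$.

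Once the formula holds for the generators $(a,0,\dots,0)$ and their Verschiebungs, bilinearity extends it to all of $W_n(K)\times K^*$, giving $[\ ,\ )_n$ for every finite $n$; taking the inverse limit over $n$ — using that both sides are manifestly compatible with the transition maps $W_{n+1}\to W_n$ and that $\tilde x$ for $x\in W_\infty(K)$ is the coherent limit of the finite-level data — yields the statement for $[\ ,\ ) = [\ ,\ )_\infty$.

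I expect the main obstacle to be the Verschiebung/Frobenius compatibility step: controlling precisely how the explicit lifts $\tilde x,\tilde y$ behave under $V$ and $F$, and showing the discrepancy between $\widetilde{Vx}$ and the naive guess is always an exact differential plus a term of high $p$-adic valuation, so that it dies under $\mathrm{Res}\circ\mathrm{Tr}$ in $\Z_p/p^n\Z_p$. This is where the ``new and simple'' nature of the construction in Theorem~\ref{100} must be exploited: the whole point is presumably that $\tilde x$ is chosen so these error terms are transparently negligible, whereas with ghost coordinates they would be opaque. A secondary difficulty is a clean treatment of convergence and well-definedness at $n=\infty$, ensuring $\tilde x$ and $\mathrm{Res}(\tilde x\,d\tilde y/\tilde y)$ make sense $p$-adically for a convergent (rather than polynomial) Witt vector, which should follow from the finite-level compatibility but deserves an explicit continuity estimate.
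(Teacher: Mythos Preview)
There is a genuine gap in your bootstrap strategy. The Verschiebung relation you want is actually easy---since $F\equiv 1$ on $W(K)/\wp W(K)$ (because $F-1=\wp$) and $FV=p$, one has $V\equiv p$ there, so $[Vz,y)=p[z,y)$ and likewise $\widetilde{Vz}=p\tilde z$; your ``expected main obstacle'' is therefore a non-issue. The real problem is that this relation, together with the classical $n=1$ formula, does \emph{not} determine the full $\Z_p$-valued symbol. For a Teichm\"uller element $x=[a]$ the Serre formula only pins down $[[a],y)\bmod p$, whereas $[[a],y)$ is a genuine element of $\Z_p$ whose higher $p$-adic digits are not recovered by your scheme. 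Writing $x=\sum_j V^j[a_j]$ gives $[x,y)=\sum_j p^j[[a_j],y)$, and knowing each $[[a_j],y)$ only modulo $p$ determines the sum only modulo $p$. More formally: if $[\ ,\ )'$ is any other $\Z_p$-bilinear pairing agreeing with the classical one modulo $p$, then $\delta=[\ ,\ )'-[\ ,\ )$ is $\Z_p$-bilinear with values in $p\Z_p$; the $V$- and $F$-relations impose nothing beyond $\Z_p$-bilinearity (they follow from $V\equiv p$, $F\equiv 1$ on the quotient), so such $\delta$ are plentiful and your constraints do not characterize the symbol.

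The paper's route is different and does not attempt an induction from $n=1$. It starts from the already-known explicit formula for $[\ ,\ )_n$ in ghost coordinates (Thomas's thesis \cite{THO}), which does carry the full $\Z/p^n\Z$-valued information, and simplifies it by passing to the canonical representatives of Proposition~\ref{712}. The new formula is thus obtained by algebraically rewriting an existing one, not by bootstrapping from Artin--Schreier theory. If you wanted a direct proof avoiding ghost components, you would need an additional input---for instance an explicit computation of $[[T^{-i}],1-aT^j)\in\Z_p$ for all $i,j$ coprime to $p$, i.e.\ the local reciprocity map on one-units for the specific tower $K(\wp^{-1}[T^{-i}])/K$---and that is essentially what the ghost-coordinate formula encodes.
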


The simple nature of the above formula allows for easy computation of conductors and higher ramification groups of all $\Z_p$-towers, as in Proposition \ref{715}. 
These are the key technical results for our genus calculations, which might be of independent interests.

\begin{remark}
Many proofs in this paper, mostly regarding Artin-Schreier-Witt theory and Schmid-Witt symbols, have been removed since these results are mostly known or can be derived easily from known results. For an extended version of this paper with complete proofs,  see \cite{WAN8}. 
\end{remark}

\section{Artin-Schreier-Witt extensions}

\subsection{Witt vectors}

For a detailed description, see \cite{THO}, \cite{RABI}, or follow the exercises from \cite[Chapter VI, Exercises 46-48]{LA}. We will give a brief summary which we will use as a black box. 

Let $p$ be a prime number. Let $R$ be a commutative ring with identity. We define the ring of $p$-typical Witt vectors $W(R)=W_{\infty}(R)$ as follows. 

\begin{definition}
Let $\mathcal{C}$ be the category of commutative rings with identity. Then there is a unique functor $W: \mathcal{C} \to \mathcal{C}$ such that the following hold:
\begin{itemize}
\item For a commutative ring $R$, one has $W(R)=R^{\Z_{\geq 0}}$ as sets. \\
\item If $f: R \to S$ is a ring morphism, then the induced ring morphism satisfies $W(f) ((r_i)_i)=(f(r_i))_i$. \\
\item The map $g=(g^{(i)})_i: W(R) \to R^{\Z_{\geq 0}}$ defined by
\begin{align*}
(r_i)_i \to \left( \sum_{j=0}^i p^j r_j^{p^{i-j}} \right)_i.
\end{align*}
is a ring morphism (where $R^{\Z_{\geq 0}}$ has the product ring structure).
\end{itemize}
\end{definition}
 In $W(R)$ one has
\begin{align*}
(r_0,\ldots) + (r_0',\ldots) = (r_0+r_0', \ldots), \\
(r_0,\ldots) \cdot (r_0',\ldots) = (r_0 \cdot r_0', \ldots),
\end{align*}
where the formulas for the later coordinates are quite complicated.
The zero element of $W(R)$ is $(0,0,\ldots)$ and the identity element is $(1,0,0,\ldots)$. 
One has $W(\F_p)=\Z_p$. If $k$ is a finite field of $q$ elements, then $W(k)$ is isomorphic to the ring $\Z_q$ of integers of the unramified field extension of $\Q_p$ with residue field $k$. 

The above map $g$ is called the ghost map, and this map is an injection if $p$ is not a zero divisor in $R$. This ghost map, together with functoriality, determines the ring structure.
Furthermore, we have the Teichm\"uller map
\begin{align*}
[\ ]: R \to& W(R) \\
r \mapsto& (r,0,0,\ldots),
\end{align*}
This map is multiplicative: for $r,s \in R$ one has $[rs]=[r][s]$. 
We have the so-called Verschiebung group morphism
\begin{align*}
V: W(R) \to& W(R) \\
(r_0,r_1,r_2,\ldots) \mapsto& (0,r_0,r_1,r_2,\ldots). 
\end{align*}
We make $W(R)$ into a topological ring as follows. The open sets around $0$ are the sets of the form $V^i W(R)$. We call this the $V$-adic topology.  With this topology, $W(R)$ is complete and Hausdorff. Furthermore, a ring morphism $R \to S$ induces a continuous map $W(R) \to W(S)$. Any $r=(r_0,r_1,\ldots) \in W(R)$ can be written as $r=\sum_{i=0}^{\infty} V^i[r_i]$.

Now let us restrict to the case where $R=K$ is a field of characteristic $p$. The ring $W(K)$ has the subring $W(\F_p) = \Z_p$. Witt vectors $(x_0,x_1,\ldots) \in W(K)$ with $x_0 \neq 0$, have a multiplicative inverse (note that $W(K)$ is not a field, since $p$ is not invertible). The Frobenius map $x \mapsto x^p$ on $K$ induces a ring morphism 
\begin{align*}
F: W(K) \to& W(K) \\
(r_0,r_1,\ldots) \mapsto& (r_0^p,r_1^p,\ldots).
\end{align*}
 In fact, one has $V F = FV= \cdot p$. One also sees that $W(K)$ is a torsion-free $\Z_p$-module. Let $K'/K$ be a Galois extension and let $g \in G=\Gal(K'/K)$. Then we have a map $W(g): W(K') \to W(K')$. If $K'/K$ is finite Galois, we define the following $W(K)$-linear trace map
\begin{align*}
\mathrm{Tr}_{W(K')/W(K)}: W(K') \to& W(K) \\
x \mapsto& \sum_{g \in G} W(g)x. 
\end{align*}

\subsection{Artin-Schreier-Witt theory}

For a full treatment of Artin-Schreier-Witt theory, see \cite{WAN8}. 

Fix a prime $p$ and let $K$ be a field of characteristic $p$. Let $K^{\mathrm{sep}}$ be a separable closure of $K$. We define a group morphism 
\begin{align*}
\wp=F-\mathrm{id}: W(K^{\mathrm{sep}}) \to& W(K^{\mathrm{sep}}) \\
x \mapsto& Fx-x,
\end{align*}
with kernel $\Z_p$. One can easily show that this map is surjective.  For $a \in W(K)$ and $x=(x_0,x_1, \ldots) \in \wp^{-1}a \subset W(K^{\mathrm{sep}})$,  we set $K(\wp^{-1} a)=K(x_0,x_1,\ldots)$. This extension does not depend on the choice of $x$. In fact, $K(\wp^{-1}a)=K(\wp^{-1}b)$ if $a \equiv b \pmod{ \wp W(K)}$.

We endow $W(K)/\wp W(K)$ with the induced topology, which is the same as the $p$-adic topology (where a basis of open sets around zero is given by $\{ p^i (W(K)/\wp W(K)): i \in \Z_{\geq 0}\}$). We endow Galois groups with the Krull topology. 
The main theorem we need from Artin-Schreier-Witt theory is the following.

\begin{theorem} \label{oko}
Let $K$ be a field of characteristic $p$ with absolute abelian Galois group $G=\Gal(K^{\mathrm{ab}}/K)$ and with $p$-part $G_p$. Then as topological groups one has
\begin{align*}
W(K) / \wp W(K) \cong& \Hom_{\mathrm{cont}}(G_p, \Z_p) \\
\overline{a}=\overline{\wp x} \mapsto& \left( g \mapsto gx-x \right). 
\end{align*}
Furthermore, the field extension of $K$ corresponding to $\overline{a}=\overline{\wp x} \in W(K) / \wp W(K)$ under this bijection and Galois theory is equal to $K(\wp^{-1}a)$ and the map 
\begin{align*}
\Gal(K(\wp^{-1}a)/K) \to& \Hom(\Z_p \overline{a} , \Z_p) \\
g \mapsto (r \overline{a} \mapsto& r (gx-x))
\end{align*}
 is an isomorphism of topological groups.
\end{theorem}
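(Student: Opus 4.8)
The plan is to prove Theorem \ref{oko} in three movements: surjectivity of $\wp$, identification of the kernel, and the duality/Galois-correspondence statement. I would organize everything around the fundamental exact sequence
\begin{align*}
0 \to \Z_p \to W(K^{\mathrm{sep}}) \xrightarrow{\ \wp\ } W(K^{\mathrm{sep}}) \to 0,
\end{align*}
which I would establish as the first step. Surjectivity of $\wp=F-\mathrm{id}$ on $W(K^{\mathrm{sep}})$ is proved by building a preimage of $a=(a_0,a_1,\dots)$ coordinate by coordinate: the $n$-th coordinate of $\wp x = a$, when $x_0,\dots,x_{n-1}$ have been chosen, reads $x_n^p - x_n = (\text{polynomial in } a_0,\dots,a_n,x_0,\dots,x_{n-1})$, an Artin-Schreier equation over a field of characteristic $p$, hence solvable in $K^{\mathrm{sep}}$. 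The kernel is $\{x : Fx = x\} = \{x : x_i^p = x_i\ \forall i\} = W(\F_p) = \Z_p$. Taking Galois cohomology of this sequence over $G_K = \Gal(K^{\mathrm{sep}}/K)$ and using the (non-abelian, but here it suffices additively) vanishing $H^1(G_K, W(K^{\mathrm{sep}})) = 0$ — which follows because $W(K^{\mathrm{sep}}) = \varprojlim W_n(K^{\mathrm{sep}})$ and each $W_n$ is, by the Verschiebung filtration with graded pieces $\cong K^{\mathrm{sep}}$ (the additive group), a successive extension of copies of $\mathbb{G}_a$, so $H^1$ vanishes by the additive Hilbert 90 plus a $\varprojlim^1$ argument — yields the short exact sequence of continuous $G_K$-modules giving
\begin{align*}
W(K)/\wp W(K) \xrightarrow{\ \sim\ } H^1(G_K, \Z_p) = \Hom_{\mathrm{cont}}(G_K, \Z_p) = \Hom_{\mathrm{cont}}(G_p, \Z_p),
\end{align*}
the last equality because $\Z_p$ is pro-$p$ so every continuous homomorphism kills the prime-to-$p$ part. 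Tracing through the connecting homomorphism, $\overline{a} = \overline{\wp x}$ maps to the cocycle $g \mapsto g x - x$, which lies in $\Z_p$ by the kernel computation; this is exactly the stated formula.

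**Next I would** verify the topological statements. On the left, $W(K)/\wp W(K)$ carries the quotient of the $V$-adic topology; one checks $\wp$ interacts with $V$ and $p = VF = FV$ so that the induced filtration is the $p$-adic one, using that $V W(K) + \wp W(K)$ relates to $p W(K) + \wp W(K)$ — concretely, for $x \in W(K)$, $\wp(Vx) = FVx - Vx = px - Vx$, so $Vx \equiv px \pmod{\wp W(K)}$, which feeds an induction showing $V^i W(K)$ and $p^i W(K)$ generate the same image modulo $\wp W(K)$. On the right, $\Hom_{\mathrm{cont}}(G_p, \Z_p)$ has its natural topology (pointwise convergence, or equivalently the one from $H^1$); the iso is a homeomorphism because it is a continuous bijection between the relevant (pro-finite-type / locally compact) objects and one checks openness on the standard neighborhood bases.

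**For the field-theoretic addendum**, I would argue as follows. Given $\overline{a} = \overline{\wp x}$ with $x = (x_0, x_1, \dots) \in W(K^{\mathrm{sep}})$, the corresponding character $\chi_a : G_K \to \Z_p$, $g \mapsto gx - x$, has kernel equal to $\Gal(K^{\mathrm{sep}}/L)$ where $L$ is the fixed field of those $g$ with $gx = x$, i.e.\ $g x_i = x_i$ for all $i$, i.e.\ $L = K(x_0, x_1, \dots) = K(\wp^{-1}a)$; this uses that $K(\wp^{-1}a)$ is well-defined (independent of the choice of $x$ in $\wp^{-1}a$, and depending only on $\overline{a}$, which I would note follows since two choices of $x$ differ by an element of $\Z_p \subset W(K)$ and changing $a$ by $\wp W(K)$ changes $x$ by $W(K)$). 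Then Galois theory identifies $\Gal(K(\wp^{-1}a)/K)$ with $G_K / \ker \chi_a$, and $\chi_a$ descends to an injection into $\Z_p$ with image $\Z_p \cdot (\text{generator})$; dualizing, $\Gal(K(\wp^{-1}a)/K) \cong \Hom(\Z_p \overline{a}, \Z_p)$ via $g \mapsto (r \overline{a} \mapsto r(gx - x))$, and this is a topological isomorphism since it is a continuous bijection of profinite (or procyclic pro-$p$) groups.

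**The main obstacle** I anticipate is the vanishing $H^1(G_K, W(K^{\mathrm{sep}})) = 0$ in the correct topological/pro-category sense: one must be careful that $W(K^{\mathrm{sep}})$ is an inverse limit and that the relevant cohomology is continuous cohomology, so the clean statement is really $H^1_{\mathrm{cont}}(G_K, W(K^{\mathrm{sep}})) = 0$, which requires both the Hilbert 90 vanishing on each finite-length quotient $W_n$ (via the Verschiebung dévissage into additive groups, where additive Hilbert 90 gives $H^1(G_K, K^{\mathrm{sep}}) = 0$) and the vanishing of the associated $\varprojlim^1$ term — the latter holding because the transition maps $W_{n+1}(K') \to W_n(K')$ are surjective for every finite $K'/K$, so the Mittag-Leffler condition is satisfied. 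Everything else is bookkeeping with the functoriality of Witt vectors and the standard connecting-map computation; the topological refinements, while needing care, are routine once the algebraic exact sequence is in hand.
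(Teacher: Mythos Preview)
The paper does not give a proof of this theorem; it simply cites the extended version \cite[Theorem 3.4, Theorem 3.6]{WAN8}, in line with the remark that proofs concerning Artin--Schreier--Witt theory have been removed. Your approach---the Galois-cohomology argument from the short exact sequence $0 \to \Z_p \to W(K^{\mathrm{sep}}) \xrightarrow{\wp} W(K^{\mathrm{sep}}) \to 0$, with $H^1$-vanishing via the Verschiebung d\'evissage into copies of $\mathbb{G}_a$ and additive Hilbert~90---is the standard one and is correct; your identification of the induced topology on $W(K)/\wp W(K)$ via $Vx \equiv px \pmod{\wp W(K)}$ is exactly the right computation, and the field-theoretic addendum is handled properly.

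One minor organizational comment: rather than working directly with the non-discrete $G_K$-module $W(K^{\mathrm{sep}})$ and continuous $H^1$ (where, as you note, one must track $\varprojlim^1$ terms), it is often cleaner to first establish the finite-length isomorphisms $W_n(K)/\wp W_n(K) \cong \Hom(G_K,\Z/p^n\Z)$ for each $n$---where everything is discrete and the d\'evissage is entirely elementary---and then pass to the inverse limit over~$n$. This sidesteps the topological subtleties you flag, though your treatment of them is adequate. Either route yields the theorem.
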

\begin{proof}
See  \cite[Theorem 3.4, Theorem 3.6]{WAN8}
\end{proof}

Let $H$ be an abelian group. We define its $p$-adic completion, a $\Z_p$-module, to be 
\begin{align*}
\widehat{H}=\underset{\underset{i}{\leftarrow}}\lim\ H/p^iH.
\end{align*}
We make $\widehat{H}$ into a topological group by giving a basis $\left\{ p^i \widehat{H}: i \in \Z_{\geq 0}\right\}$ around $0$. We call this the $p$-adic topology. 

For $x \in K$,  we set $\wp x = x^p-x$. One easy proposition is the following.

\begin{lemma} \label{182}
Let $K$ be a field of characteristic $p$. Let $\mathfrak{B}$ be a basis of $K/\wp K$ over $\F_p$. Then the map
\begin{align*}
\widehat{ \bigoplus_{\mathfrak{B}} \Z_p} \to& W(K)/\wp W(K) \\
(a_b)_{b \in \mathfrak{B}} \to& \sum_{i} a_b [b] \pmod{\wp W(K)}
\end{align*}
is an isomorphism of topological groups. 
\end{lemma}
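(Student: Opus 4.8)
The plan is to build the isomorphism in two stages and then check that everything is continuous with continuous inverse. First, recall that the finite length quotients $W_n(K) = W(K)/V^n W(K)$ fit together, and that the Verschiebung filtration interacts with $\wp = F - \id$ in a controlled way: one has $\wp V^i[b] \equiv V^i[b^p] - V^i[b] \pmod{\text{higher terms}}$, and more importantly $W(K) = \overline{\bigoplus_i V^i[K]}$ as a topological group where the bar denotes $V$-adic completion, since every Witt vector is $\sum_i V^i[r_i]$. So I would begin by establishing, at the level of length-$1$ pieces, that the Teichm\"uller-type map $\bigoplus_{\mathfrak{B}} \F_p \to K/\wp K$, $(\overline{a_b}) \mapsto \sum a_b b$, is the defining isomorphism of a basis; this is just the definition of $\mathfrak{B}$ being a basis of the $\F_p$-vector space $K/\wp K$.

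Next I would analyze the associated graded of the $V$-adic (equivalently $p$-adic, since $p = VF$ and $F$ is surjective on $W(K)$ because $K$ is perfect-free but $F$ is injective... actually $F$ need not be surjective, so I must be careful) filtration on $W(K)/\wp W(K)$. The cleaner route: filter $W(K)$ by $V^n W(K)$; then $\gr^n W(K) \cong K$ via $V^n[x] \mapsto x$ (as groups, using $V^n[x] + V^n[y] = V^n[x+y] + (\text{terms in } V^{n+1})$). The operator $\wp = F - \id$ preserves this filtration, and on the graded piece $\gr^n$ it acts by $F - \id$ again, i.e. as $x \mapsto x^p - x = \wp x$ on $K$ — this uses that $F V^n[x] = V^n[x^p]$ up to higher Verschiebung (indeed $FV = p = VF$ so $FV^n[x] = V^{n-1}F V[x] = V^{n-1} p [x]\cdot$... let me instead just say $FV^n = V^n F$ since $F,V$ commute, and $F[x]=[x^p]$). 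Hence $\gr^n(W(K)/\wp W(K)) \cong K/\wp K$ for every $n \ge 0$. Therefore the map in the statement is, on each graded piece, the isomorphism $\bigoplus_{\mathfrak{B}}\F_p \xrightarrow{\sim} K/\wp K$ from the previous paragraph; by the standard filtered-module argument (a filtered map inducing isomorphisms on all associated graded pieces, between complete Hausdorff filtered groups, is an isomorphism) the map is an isomorphism of abstract groups, and since the filtrations correspond it is a homeomorphism. Finally, identify the source: $\widehat{\bigoplus_{\mathfrak B}\Z_p}$ with its $p$-adic filtration has $\gr^n \cong \bigoplus_{\mathfrak B}\F_p$, matching up, so the completion on the left is exactly what makes the domains correspond filtered-piece by filtered-piece.

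The main obstacle, and the point deserving the most care, is the bookkeeping that matches the two filtrations: the $p$-adic filtration on the left-hand completed direct sum versus the $V$-adic ($\cong p$-adic) filtration on $W(K)/\wp W(K)$. Concretely one must verify that $\sum_b a_b[b]$ lands in $p^n(W(K)/\wp W(K)) = V^n W(K) + \wp W(K)$ modulo $\wp W(K)$ precisely when each $a_b \in p^n\Z_p$, and that $\wp W(K)$ does not "absorb" unexpectedly much of the Teichm\"uller span — this is where one needs $\mathfrak{B}$ to be a genuine basis of $K/\wp K$ (not merely a spanning set) and where the $\gr^n \cong K/\wp K$ computation does the real work, since it shows $\wp W(K) \cap V^n W(K)$ has the "expected size" in each layer. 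The topology claim (that the $p$-adic topology on $W(K)/\wp W(K)$ agrees with the quotient topology, already asserted in the paragraph preceding the lemma) may be cited. I would also note that one does not need $K$ perfect anywhere; the argument is purely formal in the filtered category once the length-$1$ statement is in hand.
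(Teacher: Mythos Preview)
The paper does not actually prove this lemma in-text; it only cites \cite[Proposition 3.10]{WAN8}, so there is no proof here to compare against. Your filtered/graded-pieces argument is a natural and essentially correct approach, and is very likely close in spirit to what the cited reference does.

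One point deserves more care than you give it. Your invocation of the ``standard filtered-module argument'' requires that \emph{both} sides be complete and Hausdorff for the relevant filtrations. The left-hand side is complete by construction, but for the right-hand side you only cite the paper's assertion that the quotient topology and the $p$-adic topology on $W(K)/\wp W(K)$ agree; this does not by itself give completeness (or even Hausdorffness). You need an additional short argument, for instance: since $\ker(\wp\colon W_n(K)\to W_n(K)) = \Z/p^n\Z$ has surjective transition maps, a $\varprojlim{}^1$ computation shows $\wp W(K)\cap V^nW(K) = \wp(V^nW(K))$ for all $n$ and that
\[
W(K)/\wp W(K) \;\cong\; \varprojlim_n\, W_n(K)/\wp W_n(K),
\]
with the inverse-limit topology coinciding with the $p$-adic topology (the kernel of the projection to level $n$ being exactly $p^n\cdot(W(K)/\wp W(K)) = \overline{V^nW(K)}$). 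This simultaneously yields Hausdorffness and completeness. Once that is in place, your identification $\gr^n(W(K)/\wp W(K))\cong K/\wp K$ (via $FV^n=V^nF$ and $\wp(V^n r)\equiv V^n[\wp r_0]\bmod V^{n+1}$) is correct, the induced map on each graded piece is the defining basis isomorphism $\bigoplus_{\mathfrak B}\F_p \xrightarrow{\sim} K/\wp K$, and the conclusion follows. Your remark that perfectness of $K$ is nowhere needed is also correct.
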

\begin{proof}
See  \cite[Proposition 3.10]{WAN8}. 
\end{proof}

\begin{example} \label{183}
In certain cases, one can easily find a basis of $K/\wp K$ over $\F_p$. Below we will construct a subset $\mathcal{D}$ of $K$ which injects into $K/\wp K$ and such that its image forms an $\F_p$-basis of $K/\wp K$. 
For this purpose, it is enough to show that $\mathrm{Span}_{\F_p}(\mathcal{D}) \cap \wp K = 0$ and $\mathrm{Span}_{\F_p}(\mathcal{D}) + \wp K=K$. 
\begin{itemize}
\item Assume $K$ is a finite field. Take any vector $b$ with $b \not \in \wp K$, that is, take any $b \in K$ with $\mathrm{Tr}_{K/\F_p}(b) \neq 0$. One can take $\mathcal{D}=\{b\}$.
\item Assume $K=k((T))$ for a perfect  field $k$. Let $\mathfrak{B}$ be a subset of $k$ giving a basis of $k/\wp k$ over $\F_p$ and let $\mathfrak{C}$ be a basis of $k$ over $\F_p$. Then one can take
\begin{align*}
\mathcal{D}= \mathfrak{B} \sqcup \{ c T^{-i}: c \in \mathfrak{C},\ (i,p)=1,\  i\geq 1\}.
\end{align*}
Let us prove this result. If $f \in Tk[[T]]$, then set $g=- \sum_{i=0}^{\infty} f^{p^i} \in Tk[[T]]$. One has $\wp g =f$. Note that $a_iT^{-ip} \equiv a_i^{1/p} T^{-i} \pmod{ \wp K}$ (where we use that $k$ is perfect). Hence we find $\mathrm{Span}_{\F_p}(\mathcal{D}) + \wp K=K$.  Let $f = \sum_i a_i T^i$. One has $\wp f=  \sum_i a_i^p T^{ip}- \sum_i a_i T^i$. We find $\mathrm{Span}_{\F_p}(\mathcal{D}) \cap \wp K = 0$.
\item Assume $K=k(X)$ for some perfect field $k$. Let $\mathfrak{B}$ be a subset of $k$ giving a basis of $k/\wp k$ over $\F_p$ and let $\mathfrak{C}$ be a basis of $k$ over $\F_p$. Then one can take
\begin{align*}
\mathcal{D}= \mathfrak{B} \sqcup& \bigsqcup_f \left\{ \frac{bX^i}{f^j}:\ (j,p)=1,\ j\geq 1, \ 0 \leq i< \deg(f),\ b \in \mathcal{C} \right\} \\ \sqcup& \{ b X^j,\ (j,p)=1,\ j\geq 1, \ b \in \mathcal{C}\},
\end{align*}
where $f\in k[X]$ runs over monic irreducible polynomials. One can easily show this by using partial fractions. 
\end{itemize}
\end{example}

\section{Local function fields } \label{s3}

Let $k$ be a finite field of cardinality $q$ and characteristic $p$. We set $\Z_q=W(k)$. Let $K=k((T))$. The field $K$ has a natural valuation. If $f=\sum_{i \geq v} a_i T^i$ with $a_v \neq 0$, then the valuation is $v$. We set $\pa=Tk[[T]]$, the unique maximal ideal of $k[[T]]$.

Let $K^{\mathrm{ab}}$ be the maximal abelian extension of $K$. Let $G=\Gal(K^{ab}/K)$ with $p$-part $G_p$, all endowed with the Krull topology. Set $\widehat{K^*}= \underset{\underset{n}{\leftarrow}}{\lim}\ K^*/(K^*)^{p^n}$, the $p$-adic completion of $K^*$ with its natural $p$-adic topology. Note that $\widehat{K^*} \cong T^{\Z_p} \times U_1$ where $U_1=1+\pa$ are the one units of $K$. We usually identify $\widehat{K^*}$ with $T^{\Z_p} \times U_1$. We have a natural map $K^* \to \widehat{K^*}$, with kernel $k^*$. 

The Artin map (or Artin reciprocity law) from class field theory is a certain group morphism $K^* \to G$ (see \cite{SE1}). This map is usually the best way to understand the group $G$ and to understand ramification in abelian extensions of $K$.
This Artin map induces a homeomorphism
\begin{align*}
\psi: \widehat{K^*} \to G_p.
\end{align*}
Theorem \ref{oko} gives an isomorphism $W(K) / \wp W(K) \to \Hom_{\mathrm{cont}}(G_p, \Z_p)$. If we combine both maps, we obtain a $\Z_p$-bilinear, hence continuous, symbol
\begin{align*}
[\ ,\ ): W(K)/\wp W(K) \times \widehat{K^*} \to  \Z_p \\
(\overline{\wp x},y) \mapsto& \psi(y)x-x.
\end{align*}
This symbol is often called the Schmid-Witt symbol. For $1\leq n \leq \infty$, reducing module $p^n$ gives the level $n$ Schmid-Witt 
symbol 
$$[\ ,\ )_n: W_n(K)/\wp W_n(K) \times \widehat{K^*} \to  \Z_p/p^n\Z_p$$
mentioned in the introduction, where $W_n(K)$ denotes the length $n$ Witt vectors. 
 
Note that the group $W(K)/\wp W(K)$ can be described as follows. 

\begin{proposition} \label{712}
Let $\alpha \in k$ with $\mathrm{Tr}_{k/\F_p}(\alpha) \neq 0$ and set $\beta=[\alpha] \in \Z_q \subset W(K)$. Then any $x \in W(K)$ has a unique representative in $W(K)/\wp W(K)$ of the form 
\begin{align*}
c \beta + \sum_{i \geq 1, (i,p)=1} c_i [ T^{-i} ].
\end{align*}
 with $c \in \Z_p $ and $c_i \in \Z_q$ with $c_i \to 0$ as $i \to \infty$.
\end{proposition}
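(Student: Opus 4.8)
The plan is to combine Lemma \ref{182} with the explicit basis $\mathcal{D}$ constructed in Example \ref{183} for $K=k((T))$ with $k$ a finite field. Recall that Example \ref{183} exhibits, for $K=k((T))$ with $k$ perfect, the $\F_p$-basis
$$\mathcal{D} = \mathfrak{B} \sqcup \{ cT^{-i} : c \in \mathfrak{C},\ (i,p)=1,\ i \geq 1\}$$
of $K/\wp K$ over $\F_p$, where $\mathfrak{B}$ is an $\F_p$-basis of $k/\wp k$ and $\mathfrak{C}$ is an $\F_p$-basis of $k$. Since $k$ is a finite field, $\mathrm{Tr}_{k/\F_p}: k \to \F_p$ is surjective with one-dimensional image, so $k/\wp k \cong \F_p$ and we may take $\mathfrak{B} = \{\alpha\}$ for any single $\alpha \in k$ with $\mathrm{Tr}_{k/\F_p}(\alpha)\neq 0$. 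Plugging this basis into the isomorphism of Lemma \ref{182} gives
$$\widehat{\,\Z_p \oplus \bigoplus_{c \in \mathfrak{C},\ (i,p)=1,\ i\geq 1} \Z_p\,} \;\xrightarrow{\ \sim\ }\; W(K)/\wp W(K),$$
sending a tuple $(a, (a_{c,i}))$ to $a[\alpha] + \sum_{c,i} a_{c,i}[cT^{-i}] \pmod{\wp W(K)}$.

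Next I would unwind the two components. The first component gives the term $c\beta$ with $c \in \Z_p$ and $\beta = [\alpha]$, exactly as in the statement. For the second component, I would group the basis vectors $\{cT^{-i}\}$ by the exponent $i$: for each fixed $i$ with $(i,p)=1$, $i \geq 1$, the elements $\{[cT^{-i}] : c \in \mathfrak{C}\}$ span (over $\Z_p$, after $p$-adic completion) a copy of $\bigoplus_{c \in \mathfrak{C}} \Z_p$, and since $\mathfrak{C}$ is an $\F_p$-basis of $k$, this completion is canonically $\Z_q = W(k)$. Concretely, $\sum_{c \in \mathfrak{C}} a_{c,i}[cT^{-i}]$ should be rewritten as $[c_i \cdot T^{-i}]$ — or more precisely, since Teichmüller is only multiplicative and not additive, as $c_i[T^{-i}]$ for a suitable $c_i \in \Z_q$ depending $\Z_p$-linearly on the $a_{c,i}$ — using that $[cT^{-i}] = [c][T^{-i}]$ and that the $\Z_p$-span of $\{[c] : c \in \mathfrak{C}\}$ is $\Z_q$ modulo higher Verschiebung corrections. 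This is the one place requiring care: the identification of $\bigoplus_{c\in\mathfrak{C}}\Z_p \cdot [c]$ inside $W(K)/\wp W(K)$ with $\Z_q \cdot [T^{-i}]$ is not literally on the nose at the level of Witt vectors, but it holds modulo $\wp W(K)$, and this is precisely the content one extracts by combining Lemma \ref{182} with the multiplicativity of the Teichmüller lift. Finally, the $p$-adic completion of the direct sum over all $i$ translates the condition "all but finitely many coordinates are $p$-adically small" into "$c_i \to 0$ as $i \to \infty$", matching the convergence hypothesis in the statement.

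The main obstacle is the bookkeeping in the previous paragraph: making rigorous the passage from the naive basis $\{[cT^{-i}]\}_{c\in\mathfrak{C}}$ to the packaged form $c_i[T^{-i}]$ with $c_i \in \Z_q$, i.e.\ showing that every $\Z_p$-linear (completed) combination $\sum_{c} a_{c,i}[cT^{-i}]$ is congruent mod $\wp W(K)$ to $c_i[T^{-i}]$ for a unique $c_i \in \Z_q$, and that distinct tuples give distinct representatives. This amounts to showing that the natural map $\Z_q \to W(K)/\wp W(K)$, $c_i \mapsto c_i[T^{-i}]$, when summed over the coordinate blocks indexed by $i$, assembles into the isomorphism of Lemma \ref{182}; uniqueness then follows from the injectivity there. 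Everything else — the appearance of $c\beta$, the restriction to $(i,p)=1$, the convergence $c_i \to 0$ — is a direct transcription of Example \ref{183} and Lemma \ref{182}, so I would keep those parts brief and concentrate the writeup on this identification.
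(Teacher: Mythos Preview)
Your approach is the paper's: it simply cites Lemma \ref{182} and Example \ref{183}. One remark: the ``main obstacle'' you flag is not really there. Since the Teichm\"uller map is multiplicative, $[cT^{-i}]=[c][T^{-i}]$ holds exactly in $W(K)$, so $\sum_{c\in\mathfrak{C}} a_{c,i}[cT^{-i}] = \bigl(\sum_{c\in\mathfrak{C}} a_{c,i}[c]\bigr)[T^{-i}]$; and because $\mathfrak{C}$ is an $\F_p$-basis of $k$, the lifts $\{[c]:c\in\mathfrak{C}\}$ form a $\Z_p$-basis of $\Z_q$ (their reductions span $k$ and $\Z_q$ is free of rank $[k:\F_p]$), so $c_i:=\sum_c a_{c,i}[c]$ ranges bijectively over $\Z_q$. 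No correction modulo $\wp W(K)$ is needed --- the repackaging is literal, which shortens the write-up considerably.
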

\begin{proof}
This follows from Lemma \ref{182} and Example \ref{183}. 
\end{proof}

Combining known formulas for $[\ ,\ )$ as in \cite{THO} together with our insight of Proposition \ref{712} allows one to prove a simple formula for $[\ ,\ )$, 
which we now describe. 

Consider the ring 
\begin{align*}
R=\left\{\sum_{i \in \Z} a_i T^i: a_i \in \Z_q,\ \lim_{i \to -\infty} a_i=0\right\}= \underset{\underset{i}{\leftarrow}}\lim\ \Z_q/p^i \Z_q ((T))
\end{align*}
of two sided power series with some convergence property. We have a residue map
\begin{align*}
\mathrm{Res}: R \to& ~\Z_q \\
\sum_i a_i T^i \to& ~a_{-1}. 
\end{align*}
Let $x \in W(K)$. Let $c\beta + \sum_{(i,p)=1} c_i [  T]^{-i}$ be its unique representative modulo $\wp W(K)$ as in Proposition \ref{712}. We define a map 
\begin{align*}
\tilde{}: W(K)/\wp W(K) \to& R \\
c\beta + \sum_{(i,p)=1} c_i [  T]^{-i} \pmod{\wp W(K)} \mapsto& c\beta + \sum_{(i,p)=1} c_i T^{-i}.
\end{align*}
Any element $y \in \widehat{K^*} \cong T^{\Z_p} \times \left( 1+ Tk[[T]] \right)$ can uniquely be written as (with some abuse of notation)
\begin{align*}
y=T^e \cdot \prod_{(i,p)=1}^{\infty}  \prod_{j=0}^{\infty} (1- a_{ij} T^i)^{p^j}
\end{align*}
with $e \in \Z_p$ and $a_{ij} \in k$. 
We define another map 
\begin{align*}
\tilde{ }: \widehat{K^*} \cong T^{\Z_p} \times \left( 1+ Tk[[T]] \right) \to& T^{\Z_p} \times \left( 1+ T\Z_q[[T]] \right) \\
T^e \cdot \prod_{(i,p)=1}^{\infty}  \prod_{j=0}^{\infty}  (1- a_{ij} T^i)^{p^j} \mapsto& T^e \cdot \prod_{(i,p)=1}^{\infty}  \prod_{j=0}^{\infty}  (1- [a_{ij}]T^i)^{p^j}.
\end{align*}
Furthermore, we define the group morphism
\begin{align*}
\mathrm{dlog}: T^{\Z_p} \times \left( 1+ T\Z_q[[T]] \right) \to& \Z_q((T)) \\
T^e \cdot f \mapsto& \frac{e}{T} + \frac{df}{f}
\end{align*}
where $df$ is the formal derivative of $f$. We have the following formula for $[\ ,\ )$, which resembles formulas for the simpler symbol $[\ ,\ )_1$ as in \cite{SE1}.

\begin{theorem} \label{100}
Let $x \in W(K)$ and $y \in \widehat{K^*}$. Then one has 
\begin{align*}
[x,y) = \mathrm{Tr}_{\Z_q/\Z_p} \left( \mathrm{Res}( \tilde{x} \cdot \mathrm{dlog}\tilde{y}) \right).
\end{align*}
Equivalently, let $x \equiv c\beta + \sum_{(i,p)=1} c_i [ T]^{-i} \pmod{\wp W(K)}$ as in Proposition \ref{712}, and $y=T^e \cdot \prod_{(i,p)=1}^{\infty}  \prod_{j=0}^{\infty}  (1- a_{ij} T^i)^{p^j} \in \widehat{K^*}$ with $a_{ij} \in k$ and $e \in \Z_p$. Then one has:
\begin{align*}
[x,y) = ce\mathrm{Tr}_{\Z_q/\Z_p}(\beta)- \sum_{j=0}^{\infty} p^j  \mathrm{Tr}_{\Z_q/\Z_p} \left(\sum_{(i,p)=1} c_i \sum_{l|i} l [a_{lj}]^{i/l} \right) \in \Z_p. 
\end{align*}
\end{theorem}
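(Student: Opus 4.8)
\textbf{Proof proposal for Theorem \ref{100}.}

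The plan is to reduce the general formula to a computation on a spanning set and then verify it there, using bilinearity and continuity of $[\ ,\ )$ in both variables. Concretely, by Proposition \ref{712} every $x\in W(K)/\wp W(K)$ is a $\Z_p$\dash (resp. $\Z_q$\dash) linear combination, convergent in the $p$\dash adic topology, of $\beta$ and the elements $[T^{-i}]$ with $(i,p)=1$; and every $y\in\widehat{K^*}$ is, up to the $T^e$ factor, an infinite product of the elementary one\dash units $(1-a\,T^i)^{p^j}$ with $a\in k$, $(i,p)=1$, $j\geq 0$. Both of the auxiliary maps $\tilde{\ }$ and $\mathrm{dlog}$, and the pairing $(u,y)\mapsto \mathrm{Tr}_{\Z_q/\Z_p}(\mathrm{Res}(\tilde u\cdot\mathrm{dlog}\,\tilde y))$, are $\Z_p$\dash bilinear and continuous for the relevant topologies (the residue picks out a single coefficient, so convergence of the series is exactly what makes $\mathrm{Res}$ continuous). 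Since $[\ ,\ )$ is $\Z_p$\dash bilinear and continuous by construction, it therefore suffices to check the identity when $x$ is one of $\beta$ or $[T^{-i}]$ and $y$ is one of $T$ or $(1-[a]T^i)$ — i.e. on generators — and then read off the second, explicit, displayed formula by expanding $\mathrm{dlog}$ and collecting the $\sum_{l\mid i}$ terms from $d(1-[a]T^l)/(1-[a]T^l)$.

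For the generator computations I would proceed as follows. First, the ``tame'' part: the pairing of $\beta$ against $T$. Here $\tilde\beta=\beta$ and $\mathrm{dlog}(T)=1/T$, so $\mathrm{Res}(\beta/T)=\beta$ and the right\dash hand side is $\mathrm{Tr}_{\Z_q/\Z_p}(\beta)$; on the class field theory side, pairing with $T$ gives the value of the Artin symbol of a uniformizer on the unramified $\Z_p$\dash extension, which is the Frobenius, and the induced homomorphism evaluates to $\mathrm{Tr}_{k/\F_p}(\alpha)=\mathrm{Tr}_{\Z_q/\Z_p}(\beta)\bmod p$ — this is exactly the normalization built into $\beta$ via $\mathrm{Tr}_{k/\F_p}(\alpha)\neq 0$. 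The remaining cases ($\beta$ against a one\dash unit, $[T^{-i}]$ against $T$, $[T^{-i}]$ against a one\dash unit) are where the real content sits, and for these I would invoke the known formulas for $[\ ,\ )$ recalled from \cite{THO} (in ghost coordinates), together with Proposition \ref{712}, and show the ghost\dash coordinate expression collapses to $\mathrm{Tr}_{\Z_q/\Z_p}(\mathrm{Res}(\tilde x\cdot\mathrm{dlog}\,\tilde y))$. The key algebraic identity to extract is that lifting $a_{ij}\in k$ to the Teichm\"uller representative $[a_{ij}]$ and taking $d\log$ of $\prod(1-[a_{ij}]T^i)^{p^j}$ produces exactly $\sum_j p^j\sum_{(i,p)=1}\bigl(\text{sum over divisors }l\mid i\text{ of }l[a_{ij}]^{i/l}\bigr)T^{-i}\,dT/T$ type contributions after multiplying by $c_i T^{-i}$ and taking residues; matching this against the combinatorics of the Witt\dash vector addition/Artin\dash Hasse exponential on the class field theory side is the step that makes the $\sum_{l\mid i} l[a_{lj}]^{i/l}$ appear, and once that single identity is in hand the second displayed formula follows by summing.

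The main obstacle I anticipate is precisely this last matching: the classical formulas for the Schmid\dash Witt symbol are written in ghost coordinates and are, as the authors note, ``quite complicated,'' so the heart of the proof is a bookkeeping argument showing that the ghost\dash coordinate expression, when restricted to the normal form of Proposition \ref{712} and to the elementary one\dash units, telescopes into a residue of a product of a Teichm\"uller\dash lifted Laurent series with a logarithmic derivative. I would organize this by first treating $j=0$ (no Verschiebung/Frobenius twisting), where $\mathrm{dlog}(1-[a]T^i) = -\sum_{m\geq 1}i[a]^m T^{im-1}\,dT$ pairs against $c_i[T^{-i}]$ to give $-\sum_{l\mid i}l[a]^{i/l}$ after reindexing, then bootstrapping to general $j$ via the relations $VF=FV=\cdot p$ and the functoriality of $W$, which is exactly what produces the $p^j$ weights and the $[a_{ij}]^{i/l}$ exponents. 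Continuity then lets us pass from finite products and finite sums to the stated infinite expressions, and reduction mod $p^n$ gives the level\dash$n$ symbol claimed in the introduction.
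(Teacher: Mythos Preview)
Your approach is essentially the same as the paper's: the paper's proof is a one-line reference, ``We deduce the formula from the formulas in \cite{THO},'' and your proposal is precisely a sketch of that deduction --- use $\Z_p$-bilinearity and continuity to reduce to the generators supplied by Proposition \ref{712} on the $W(K)/\wp W(K)$ side and the elementary one-units (and $T$) on the $\widehat{K^*}$ side, then match against Thomas's ghost-coordinate formulas. Two small imprecisions worth cleaning up: in the tame case $[\beta,T)$ you only assert agreement mod $p$, but the Frobenius computation actually gives $\mathrm{Tr}_{\Z_q/\Z_p}(\beta)$ on the nose (summing $F^j\beta$ over $j=0,\dots,[k:\F_p]-1$); and in your $\mathrm{dlog}$ paragraph you use the same index $i$ for both the exponent in $c_iT^{-i}$ and in $(1-[a]T^i)$, which obscures where the divisor sum $\sum_{l\mid i}$ comes from --- it arises from pairing a \emph{fixed} $c_iT^{-i}$ against the \emph{various} factors $(1-[a_{lj}]T^l)$ with $l\mid i$.
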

\begin{proof}
See \cite[Theorem 4.7]{WAN8}. We deduce the formula from the formulas in \cite{THO}. 
\end{proof}

 For a finite abelian extension $L/K$,  the Artin map induces a map $\psi_L: K^* \to \Gal(L/K)$. The conductor of $L/K$ is defined to be $\mathfrak{f}(L/K)=\pa^i$ where $i$ is minimal such that $U_i \subseteq \ker(\psi_L)$, where $U_i=1+\pa^i$ for $i \geq 1$ and $U_0=k[[T]]^*$. The explicit formula above allows one to easily compute conductors. 

\begin{proposition} \label{715}
Let $x=\wp (y_0,y_1,\ldots) \in W(K)$ with $x \equiv c \beta + \sum_{(i,p)=1} c_i [ T]^{-i} \pmod{\wp W(K)}$ as in Proposition \ref{712}.
Let $n \in \Z_{\geq 1}$. One has 
\begin{align*}
\mathfrak{f}_n:=\mathfrak{f}(K(y_0,y_1,\ldots,y_{n-1})/K)=\pa^{u_n}
\end{align*}
with
\begin{align*}
u_n = \left\{  \begin{array}{cc} 1+ \max\{ ip^{n-v(c_i)-1} : i \textrm{ such that } v(c_i)<n\} & \mathrm{if\ }\exists i: v(c_{i})<n \\
0 & \mathrm{otherwise}. \end{array} \right.
\end{align*}
\end{proposition}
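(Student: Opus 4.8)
The plan is to compute the conductor $\mathfrak{f}_n = \mathfrak{f}(K(y_0,\dots,y_{n-1})/K)$ directly from the level $n$ Schmid-Witt pairing, using the explicit formula of Theorem \ref{100}. By definition, $\mathfrak{f}_n = \pa^{u_n}$ where $u_n$ is the smallest nonnegative integer such that $U_{u_n} \subseteq \ker(\psi_L)$ for $L = K(y_0,\dots,y_{n-1})$. Now $\psi_L(y)$ acts trivially on $L$ exactly when $[\,x, y)_n = 0$ in $\Z_p/p^n\Z_p$ for the Witt vector $x = \wp(y_0,\dots,y_{n-1})$ whose class mod $\wp W_n(K)$ is $c\beta + \sum_{(i,p)=1} c_i[T]^{-i}$ (truncated to length $n$; note only the $c_i$ with $v(c_i) < n$ survive mod $p^n$). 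So the task reduces to: determine the smallest $m$ such that $[\,x, y)_n = 0$ for all $y \in U_m$.

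First I would reduce to the one-units $U_m$ with $m \geq 1$, since $U_0 = k[[T]]^*$ never lies in the kernel unless the extension is unramified, which here it is not (some $c_i \neq 0$ with $i$ coprime to $p$). Writing a typical element of $U_m$ in the standard form $y = \prod_{(i,p)=1}\prod_{j\geq 0}(1 - a_{ij}T^i)^{p^j}$, membership in $U_m$ forces $a_{ij} = 0$ whenever $i p^j < m$. Plugging into the second displayed formula of Theorem \ref{100} (with $e = 0$, so the first term vanishes), we get
\begin{align*}
[x,y)_n = -\sum_{j=0}^{n-1} p^j \,\mathrm{Tr}_{\Z_q/\Z_p}\!\left(\sum_{(i,p)=1} c_i \sum_{l \mid i} l\,[a_{lj}]^{i/l}\right) \in \Z_p/p^n\Z_p.
\end{align*}
The key observation is that the term $p^j \cdot \mathrm{Tr}(\,\cdots c_i \cdots l[a_{lj}]^{i/l}\,)$ contributes to $\Z_p/p^n\Z_p$ only when $j + v(c_i) < n$, i.e. $j < n - v(c_i)$, and for such a term the relevant variable is $a_{lj}$ with $l \mid i$, $l$ coprime to $p$. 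One then sorts the contributions by the quantity $l p^j$: the pairing is sensitive to $a_{lj}$ precisely for those pairs appearing with a nonzero coefficient modulo $p^n$, and the largest such $l p^j$ governs how deep into the unit filtration one must go to kill everything.

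The main step — and the main obstacle — is the \emph{non-degeneracy bookkeeping}: I must show that for the critical index, namely the $i_0$ (coprime to $p$) and exponent achieving $\max\{i p^{n - v(c_i) - 1} : v(c_i) < n\}$, the coefficient of $[a_{i_0, n - v(c_{i_0}) - 1}]$ in $[x,y)_n$ is genuinely nonzero as a $\Z_p/p^n\Z_p$-valued functional of $a_{i_0,j}$, so that $U_{u_n - 1} \not\subseteq \ker$, while for all $y \in U_{u_n}$ every surviving term vanishes, so $U_{u_n} \subseteq \ker$. The upper bound direction ($U_{u_n} \subseteq \ker$) is the easier inclusion: if $y \in U_{u_n}$ then $a_{lj} = 0$ whenever $l p^j < u_n = 1 + \max\{i p^{n - v(c_i)-1}\}$, hence $l p^j \leq \max\{i p^{n-v(c_i)-1}\}$ fails for all nonzero terms, i.e. each nonzero $a_{lj}$ has $l p^j > \max\{\dots\} \geq l' p^{n - v(c_{i})-1}$ for the relevant $i$... here one must carefully match the index $l$ appearing via $l \mid i$ against the maximizing $i$; the cleanest route is to observe that a term indexed by $(i, j, l)$ with $l \mid i$ contributes to $\Z_p/p^n$ only if $j \le n - v(c_i) - 1$, and then $l p^j \le i p^{n - v(c_i)-1} \le \max\{\dots\} < u_n$, forcing $a_{lj} = 0$. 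For the lower bound, isolating the top term requires checking that no cancellation occurs against lower-order terms — here one uses that the leading contribution sits at the highest $V$-adic / $p$-adic "weight" $l_0 p^{j_0}$ and that $\mathrm{Tr}_{\Z_q/\Z_p}$ together with the Teichmüller lift $[a_{i_0,j_0}] \mapsto c_{i_0}[a_{i_0,j_0}]$ is a nonzero $\F_p$-linear (after reduction) functional, since $c_{i_0}$ is a unit times $p^{v(c_{i_0})}$ and the trace form on $\Z_q/\Z_p$ is nondegenerate. Once both inclusions are in place, $u_n$ is pinned down to the stated value, and the "otherwise" case (no $i$ with $v(c_i) < n$) is immediate since then $x \equiv c\beta \pmod{\wp W_n(K)}$ gives an unramified extension, so $\mathfrak{f}_n = \pa^0$.
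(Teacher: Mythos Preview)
Your approach is correct and matches the paper's intended method: the paper defers the full proof to \cite[Proposition 4.14]{WAN8} but explicitly states that ``the explicit formula above allows one to easily compute conductors,'' meaning Theorem \ref{100} is precisely the tool to be used. Your two inclusions are the right ones; the upper bound is clean as you wrote it, and for the lower bound your test element $y=(1-aT^{i_0})^{p^{j_0}}$ with $j_0=n-v(c_{i_0})-1$ works because any other $i$ with $i_0\mid i$, $(i,p)=1$, $i>i_0$ must satisfy $v(c_i)>v(c_{i_0})$ (else $ip^{n-v(c_i)-1}>i_0p^{j_0}$, contradicting maximality), so $p^{j_0}c_i\equiv 0\pmod{p^n}$ and only the $i=i_0$ term survives---then nondegeneracy of the trace finishes it exactly as you say.
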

\begin{proof}
 See \cite[Proposition 4.14]{WAN8}.
\end{proof}

\begin{remark}
Let us give an essentially equivalent version of Proposition \ref{715} in terms of upper ramification groups. Let $x \in W(K)$ as in Proposition \ref{715}. For $r \in \Z_{\geq 0}$ consider the $r$-th upper ramification group 
\begin{align*}
H^r=\psi_{K(\wp^{-1}x)}(U_r) \subseteq \Gal(K(\wp^{-1}x)/K) \cong  \Hom(\Z_p \overline{x} , \Z_p). 
\end{align*}
One then has
\begin{align*}
H^r=\left\{ \tau: \tau \overline{x} \in p^{b_r} \Z_p \right\} \subseteq \Hom(\Z_p \overline{x}, \Z_p),
\end{align*}
where
\begin{align*}
b_r = \left\{ \begin{array}{cc} 
\min\{ v(c_i)+ \lceil  \log_p \left(\frac{r}{i}\right) \rceil: (i,p)=1 \}  & \textrm{if } r \geq 1 \\
\min\{ v(c_i) : (i,p)=1 \} & \textrm{if } r=0.
\end{array} \right.
\end{align*}
See  \cite[Proposition 4.14]{WAN8} for the details.
\end{remark}

\section{Global function fields}

Let $k$ be a finite field of characteristic $p$. Let $K=K_0$ be a function field over $k$ (a finitely generated field extension of $k$ of transcendence degree $1$) with full constant field $k$. Let $x=(x_0,x_1,\ldots)=\wp (y_0,y_1,\ldots) \in W(K)$. This Witt vector defines a field extension $K_{\infty}/K$. For simplicity, we assume that $x_0 \not \in \wp K$. Set $K_i=K(y_0,y_1,\ldots,y_{i-1})$. One then has a tower of fields $K=K_0 \subset K_1 \subset K_2 \subset \ldots \subset K_{\infty}=K(y_0,y_1,\ldots)$ with $\Gal(K_n/K) \cong \Z/p^n\Z$ and $\Gal(K_{\infty}/K) \cong \Z_p$.  

\subsection{Genus formula}

Let $\pa$ be a place of $K$ with residue field $k_{\pa}$ and uniformizer $\pi_{\pa}$. Then, locally, this extension is given by $x=(x_0,x_1,\ldots) \in W(K_{\pa})$ where $K_{\pa} \cong k_{\pa}((\pi_{\pa}))$ is the completion at $\pa$ (by the Cohen structure theorem). Let $\alpha_{\pa} \in k_{\pa}$ with $\mathrm{Tr}_{k_{\pa}/\F_p}(\alpha_{\pa}) \neq 0$. Set $\beta_{\pa}=[\alpha_{\pa}] \in W(k_{\pa})$. One has 
$$x \equiv c_{\pa} \beta_{\pa} + \sum_{(i,p)=1} c_{\pa,i} [\pi_{\pa}]^{-i} \pmod{\wp W(K_{\pa})}$$
with $c_{\pa} \in \Z_p$ and $c_{\pa,i} \in W(k_{\pa})$ and $c_{\pa,i} \to 0$ as $i \to \infty$ (Proposition \ref{712}). 
Proposition \ref{715} then shows that the conductor at $\pa$ of $K_n/K$ is equal to $\mathfrak{f}_{\pa,n} =\pa^{u_{\pa,n}}$ with
\begin{align*}
u_{\pa,n} = \left\{  \begin{array}{cc} 1+ \max\{ ip^{n-v(c_{\pa,i})-1}  : i \textrm{ such that } v(c_{\pa,i})<n\} & \mathrm{if\ }\exists i: v(c_{\pa,i})<n \\
0 & \mathrm{otherwise}. \end{array} \right.
\end{align*}
The conductor of $K_n/K$ is the formal expression
\begin{align*}
\mathfrak{f}_n=\mathfrak{f}(K_n/K) = \prod_{\pa} \mathfrak{f}_{\pa,n},
\end{align*}
which is a finite product. 

Let $g_n$ be the genus of $K_n$, where the genus is the genus of the corresponding smooth projective curve defined by $K_n$ over the the integral closure of $k$ inside $K_n$.  
We let $n_c$ be maximal such that $K_{n_c}/K$ is a constant field extension.
\begin{theorem} \label{831}
For $n \in \Z_{\geq 1}$,  we have 
\begin{align*}
p^{\min\{n_c,n \}}(2 g_n - 2) = p^{n} ( 2 g_0 - 2 ) + \sum_{\pa} [k_{\pa}:k] \sum_{i=1}^n \varphi(p^i) u_{\pa,i}.
\end{align*}
\end{theorem}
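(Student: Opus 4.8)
The plan is to reduce the global genus computation to the conductor–discriminant data supplied by Proposition \ref{715} via the Riemann–Hurwitz formula, together with the standard relation between the different exponent and the conductor exponents of the intermediate characters of an abelian extension. First I would recall that for the (possibly non-geometric) $\Z/p^n\Z$-extension $K_n/K$, the constant field of $K_n$ is the degree-$p^{\min\{n_c,n\}}$ constant extension of $k$, so that $K_n/k'$ is geometric where $k'$ is the full constant field of $K_n$; this is exactly why the factor $p^{\min\{n_c,n\}}$ appears on the left-hand side, via $2g_n-2 = [k':k]^{-1}\bigl(\ldots\bigr)$ applied with the genus being computed over $k'$. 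Concretely, Riemann–Hurwitz for $K_n/K$ (over the algebraic closure, or equivalently after base change so that the cover becomes geometric) gives
\begin{align*}
2g_n - 2 = [K_n:K]\,(2g_0 - 2) + \deg \mathfrak{d}(K_n/K),
\end{align*}
where $\mathfrak{d}(K_n/K)$ is the different, and one must bookkeep the constant-field contraction to land the power $p^{\min\{n_c,n\}}$ on the left exactly as stated.

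The key step is the local different computation at each place $\pa$. Because $K_n/K$ is abelian with Galois group $\Z/p^n\Z$, its characters are exactly those factoring through $\Z/p^i\Z$ for $0\le i\le n$, and the conductor–discriminant formula expresses the local different exponent $d_{\pa}$ at $\pa$ as a sum over characters of their local conductor exponents. Grouping characters by their exact order $p^i$ (there are $\varphi(p^i)$ of them), and using that all characters of exact order $p^i$ have the same local conductor exponent at $\pa$ — namely $u_{\pa,i}$, the conductor exponent of $K_i/K$ computed in Proposition \ref{715} — one gets
\begin{align*}
d_{\pa} = \sum_{i=1}^{n} \varphi(p^i)\, u_{\pa,i}.
\end{align*}
Here one should note that $u_{\pa,i}$ as defined is the exponent of $\mathfrak{f}(K_i/K)$, i.e. the conductor of the character cutting out $K_i$, which by the tower structure is the common conductor of all order-$p^i$ characters. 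Summing $d_{\pa}$ over the (finitely many) ramified places, weighted by the residue degree $[k_{\pa}:k]$ to convert local degree to global degree, gives $\deg\mathfrak{d}(K_n/K) = \sum_{\pa}[k_{\pa}:k]\sum_{i=1}^n \varphi(p^i) u_{\pa,i}$.

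I would then assemble the two ingredients: substitute the different degree into Riemann–Hurwitz, and absorb the constant-field subtlety by observing that the geometric genus $g_n$ (genus over the full constant field $k'$ of $K_n$, with $[k':k] = p^{\min\{n_c,n\}}$) satisfies $p^{\min\{n_c,n\}}(2g_n-2) = 2g_n^{\mathrm{over}\ k} - 2$ where the left side is what one obtains from Riemann–Hurwitz applied naively over $k$. The main obstacle I anticipate is precisely this constant-field bookkeeping: one must be careful that Proposition \ref{715} computes conductors that are insensitive to the constant subextension (the unramified-at-$\pa$ part contributes $u_{\pa,i}=0$ when $v(c_{\pa,i})\ge i$ for all $i$, consistent with a constant extension being everywhere unramified), and that the factor $p^{\min\{n_c,n\}}$ versus $p^n$ on the two sides of the displayed formula is exactly accounted for by comparing $[K_n:K] = p^n$ with $[k':k] = p^{\min\{n_c,n\}}$ — equivalently, by running Riemann–Hurwitz for the geometric cover $K_n k'/Kk'$ where $k' = $ full constant field of $K_n$, and checking the places of $Kk'$ above a place $\pa$ of $K$ recombine to give the stated weight $[k_{\pa}:k]$. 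The character-theoretic computation of the different, while standard (conductor-discriminant formula for abelian extensions of local fields of equal characteristic $p$), also requires knowing that wild characters of the same order share a conductor, which follows from the explicit upper-ramification description in the Remark after Proposition \ref{715}.
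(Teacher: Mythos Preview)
Your proposal is correct and follows exactly the paper's approach: Riemann--Hurwitz together with the F\"uhrerdiskriminantenproduktformel (conductor--discriminant formula), with the constant-field factor $p^{\min\{n_c,n\}}$ arising from the discrepancy between $[K_n:K]=p^n$ and the degree of the constant extension. One small terminological point: what you call ``the local different exponent $d_{\pa}$ at $\pa$'' is really the local discriminant exponent (the different lives upstairs on $K_n$), but since $\deg_k \mathfrak{d}(K_n/K)=\deg_k \mathfrak{D}(K_n/K)$ this does not affect the argument; also, that all order-$p^i$ characters share the conductor $u_{\pa,i}$ is automatic here because they all cut out the same subfield $K_i$, so no appeal to the ramification remark is needed.
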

\begin{proof}
This is an application of the Riemann-Hurwitz formula, together with the F\"uhrerdiskriminantenproduktformel. See \cite[Theorem 5.2]{WAN8}. 
\end{proof}

\subsection{Genus lower bound}

We assume that the tower is not a constant extension, otherwise, the genus $g_n$ will be a constant. 
This means that $n_c$ is finite. Since $K_{\infty}/K$ is abelian and infinite, 
by class field theory, the extension $K_{\infty}/K$ must be 
ramified at some prime.  Let $n_u$ be maximal such that $K_{n_u}/K$ is unramified. Then $n_c \leq n_u <\infty$. 

\begin{corollary} \label{15b}
Let $n \geq n_u$. The following statements hold:

\begin{enumerate}
	\item
\begin{align*}
p^{n_c} (2g_n-2)  \geq p^n (2g_0-2) +p^n-p^{n_u}+ p^{n_u} \frac{p^{2(n-n_u)}-1}{p+1} .
\end{align*}

\item   
\begin{align*}
\liminf_{n \to \infty} \frac{g_n}{p^{2n}} \geq \frac{1}{2p^{n_u+n_c}(p+1)}.
\end{align*}

\item  For any $\epsilon>0$,  there is an integer $m$ such that for all $n \geq m$ one has
\begin{align*}
g_n \geq \frac{p^{2n-n_u-n_c}}{2(p+1)+\epsilon} \geq \frac{p^{2n-n_u-n_c-1}}{3+\epsilon}.
\end{align*}
\end{enumerate}
\end{corollary}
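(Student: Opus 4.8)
The plan is to derive all three parts from the genus formula in Theorem~\ref{831} by extracting the dominant contribution of a single ramified prime at each level.

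\medskip

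\textbf{Setup and reduction to one prime.} First I would fix a prime $\pa$ of $K$ at which the tower is ramified and which ramifies ``as early as possible'', i.e.\ such that $u_{\pa, n_u+1} \geq 1$; such a $\pa$ exists because $n_u$ is by definition the largest $n$ with $K_n/K$ unramified. All the terms $[k_\qa:k]\sum_{i=1}^n \varphi(p^i) u_{\qa,i}$ in Theorem~\ref{831} are non-negative, so dropping every prime except $\pa$ (and using $[k_\pa:k]\geq 1$) gives
\begin{align*}
p^{\min\{n_c,n\}}(2g_n-2) \geq p^n(2g_0-2) + \sum_{i=1}^n \varphi(p^i) u_{\pa,i}.
\end{align*}
For $n\geq n_u$ we have $\min\{n_c,n\}=n_c$ since $n_c\leq n_u$.

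\medskip

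\textbf{Lower bound on the conductor exponents.} The key observation is a doubling-type growth for $u_{\pa,i}$. From Proposition~\ref{715}, once some $c_{\pa,i_0}$ has valuation $v<n$, we get $u_{\pa,n}\geq 1 + i_0 p^{n-v-1}$; in particular, writing $m$ for the smallest index with a ramified contribution, we have $u_{\pa,n_u+1}\geq 1$, hence there is an index $i_0$ (prime to $p$) with $v(c_{\pa,i_0}) = n_u$ and $i_0\geq 1$ (this is exactly the statement that $n_u$ is maximal unramified but $n_u+1$ is not). Then for every $n > n_u$,
\begin{align*}
u_{\pa,n} \geq 1 + i_0 p^{n-n_u-1} \geq 1 + p^{n-n_u-1}.
\end{align*}
For $1\leq i \leq n_u$ we only use $u_{\pa,i}\geq 0$. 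Substituting into the displayed inequality and using $\varphi(p^i) = (p-1)p^{i-1}$, the sum $\sum_{i=n_u+1}^{n} \varphi(p^i)(1 + p^{i-n_u-1})$ splits into a telescoping piece contributing $p^n - p^{n_u}$ and a geometric piece $\sum_{i=n_u+1}^n (p-1)p^{i-1}\cdot p^{i-n_u-1}$, which evaluates to $p^{n_u}\frac{p^{2(n-n_u)}-1}{p+1}$ after summing the geometric series with ratio $p^2$. This yields part~(1) exactly.

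\medskip

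\textbf{Parts (2) and (3) as asymptotic consequences.} Dividing the inequality of part~(1) by $2p^{2n}$ and letting $n\to\infty$, the dominant term on the right is $p^{n_u}\cdot p^{2(n-n_u)}/((p+1)\cdot 2 p^{n_c} p^{2n}) = 1/(2 p^{n_u+n_c}(p+1))$, while $p^n(2g_0-2)$, $p^n$, $p^{n_u}$ and the constant $-2p^{n_c}$ on the left all become negligible after division by $p^{2n}$; this gives part~(2). For part~(3), from part~(1) one gets $g_n \geq \frac{p^{2n-n_u-n_c}}{2(p+1)}\bigl(1 + o(1)\bigr)$ as $n\to\infty$, so for any $\epsilon>0$ the $o(1)$ error is absorbed by replacing $2(p+1)$ with $2(p+1)+\epsilon$ for $n$ large; the second inequality in part~(3) is the trivial $\frac{1}{2(p+1)+\epsilon}\geq \frac{1}{p}\cdot\frac{1}{3+\epsilon'}$ type estimate, using $2(p+1)\leq 3p$ for $p\geq 2$ (so $p^{2n-n_u-n_c}/(2(p+1)+\epsilon) \geq p^{2n-n_u-n_c-1}/(3+\epsilon)$ after adjusting $\epsilon$).

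\medskip

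\textbf{Main obstacle.} The only genuinely delicate point is justifying the bound $u_{\pa,n}\geq 1+p^{n-n_u-1}$ for \emph{all} $n>n_u$ simultaneously, i.e.\ that the index $i_0$ responsible for first ramification at level $n_u+1$ continues to dominate (or at least contribute) at every higher level. This follows because once $v(c_{\pa,i_0})=n_u<n$, the term $i_0 p^{n-n_u-1}$ is always present inside the max defining $u_{\pa,n}$ in Proposition~\ref{715}, regardless of what other indices do; so the max is at least this value. One must also check the edge cases $n=n_u$ (where the bound degenerates to $g_n\geq g_0$-type statements, consistent with $p^{2(n-n_u)}-1=0$) and confirm $n_c\leq n_u$, which holds since a constant field extension is everywhere unramified. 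Beyond that, everything is summing geometric and telescoping series, which I would not belabor.
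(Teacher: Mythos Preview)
Your proof is correct and follows essentially the same approach as the paper: minimize the right-hand side of the genus formula (Theorem~\ref{831}) by keeping a single ramified prime of residue degree one, use Proposition~\ref{715} to get $u_{\pa,n}\geq 1+p^{n-n_u-1}$ for $n>n_u$, sum the resulting geometric/telescoping series, and then read off parts (2) and (3) as asymptotic consequences. Your write-up is in fact more detailed than the paper's own proof, and your identification of the ``main obstacle'' (that the index $i_0$ with $v(c_{\pa,i_0})=n_u$ persists in the max for all higher levels) is exactly the point the paper leaves implicit; note also that no adjustment of $\epsilon$ is needed for the last inequality in (3), since $p(3+\epsilon)\geq 2(p+1)+\epsilon$ holds directly for all $p\geq 2$ and $\epsilon\geq 0$.
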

\begin{proof}
We try to make the genus as small as possible in the genus formula. The smallest genus is obtained if only one prime $\pa$ is ramified with $[k_{\pa}:k]=1$, such that $u_{\pa,n}=1+p^{n-n_u-1}$ for $n>n_u$, and $u_{\pa,n}=0$ for $n \leq n_u$ (see Proposition \ref{715}).
One finds for $n \geq n_u$ by Theorem \ref{831}:
\begin{align*}
p^{n_c} (2g_n-2)  \geq&  p^n (2g_0-2) + \sum_{i=n_u+1}^n \varphi(p^i) \left( 1 + p^{i-n_u-1} \right) \\
=& p^n (2g_0-2) +p^n-p^{n_u}+ p^{n_u} \frac{p^{2(n-n_u)}-1}{p+1}.  
\end{align*}
The first part is proved.
The second and third part follow by looking at the last term of the formula from the first part.
\end{proof}

\begin{remark} \label{goa}
The bounds in Corollary \ref{15b} are often sharp when the $p$-part of the class group of $K$ is $0$. In particular, the bounds are sharp when $K=k(X)$, the projective line. We will give explicit examples later.

Gold and Kisilevsky in  \cite[Theorem 1]{GOL} state that for large $n$, if $n_c=0$ one has:
\begin{align*}
g_n \geq \frac{p^{2(n-n_u)-1}}{3}.
\end{align*}
This result contains a small error which makes the result incorrect for $p=2$, $g_0=0$ (one really needs the $\epsilon$ in that case, see Proposition \ref{hulu2}). Secondly, in their proof they reduce to the case $n_u=0$, but they forget that if $n_u>0$, then more primes must ramify and hence the genus will grow faster.

Assume from now on that $n_u=0$. In fact Gold and Kisilevsky prove in an intermediate step
\begin{align*}
\liminf_{n \to \infty} \frac{g_n}{p^{2n}} \geq \frac{p-1}{2p^2}.
\end{align*}
Our result actually gives the tight bound
\begin{align*}
\liminf_{n \to \infty} \frac{g_n}{p^{2n}} \geq \frac{1}{2(p+1)}.
\end{align*}
Li and Zhao in \cite{LIC} construct a $\Z_p$-tower with the property
\begin{align*}
\lim_{n \to \infty} \frac{g_n}{p^{2n}} = \frac{1}{2(p+1)}.
\end{align*}
Li and Zhao furthermore write ``It would be interesting to determine if the bound of Gold and Kisilevsky is
the best and find some $\Z_p$-extension which realizes it.'' Our results show that their tower actually attains our limit.
\end{remark}

\subsection{Genus stability}

We will now introduce a special class of $\Z_p$-extensions of $K$. We are interested in 
classifying the cases when $g_n$ for large enough $n$ stabilizes. Note that $g_n$ is bounded below by a quadratic polynomial in $p^n$ by Corollary \ref{15b}.  We will now study the case where $g_n$ at some point becomes a quadratic polynomial in $p^n$. 
A $\Z_p$-tower $K_{\infty}/K$ is called geometric if $n_c=0$. 

\begin{proposition} \label{blaat}
 Let $K_{\infty}/K$ be a geometric $\Z_p$-extension. Then the following are equivalent:
\begin{enumerate}
\item There are $a,b,c \in \Q$, $m \in \Z_{\geq 0}$ such that for $n \geq m$ one has
\begin{align*}
g_n=a p^{2n}+b p^n+c.
\end{align*}
\item The extension $K_{\infty}/K$ is ramified at only finitely many places and for all $\pa$ the set 
\begin{align*}
\{i p^{-v(c_{\pa,i})}: (i,p)=1 \}
\end{align*}
has a maximum. 
\item The extension $K_{\infty}/K$ is ramified at only finitely many places  and for each $\pa$ which ramifies there are $a_{\pa} \in \Q_{>0}$ and $n_{\pa} \in \Z_{\geq 0}$ such that for $n \geq n_{\pa}$ one has
\begin{align*}
u_{\pa,n}=1+ a_{\pa} p^{n}.
\end{align*}

\end{enumerate}
\end{proposition}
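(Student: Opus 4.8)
The plan is to prove the cycle (2) $\Rightarrow$ (3) $\Rightarrow$ (1) $\Rightarrow$ (2), using Theorem \ref{831} for the genus formula and Proposition \ref{715} for the local conductor exponents $u_{\pa,n}$. Throughout I would fix a ramified place $\pa$ and work with the local data $c_{\pa,i}\in W(k_\pa)$, writing $v=v_p$ for the valuation. The key elementary observation is that for fixed $n$ the quantity $\max\{ i p^{n-v(c_{\pa,i})-1} : v(c_{\pa,i})<n\}$ appearing in $u_{\pa,n}$ equals $p^{n-1}\cdot M_{\pa,n}$ where $M_{\pa,n}=\max\{ i p^{-v(c_{\pa,i})} : (i,p)=1,\ v(c_{\pa,i})<n\}$, an increasing sequence of positive rationals in $n$ (once it is nonempty); the set in (2) is exactly $\{i p^{-v(c_{\pa,i})} : (i,p)=1\}$, whose supremum is $\lim_n M_{\pa,n}$ (finite or $+\infty$).

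First I would handle (2) $\Rightarrow$ (3). Assume the tower ramifies at finitely many places and each set in (2) has a maximum $d_\pa := i_0 p^{-v(c_{\pa,i_0})}$ attained at some index $i_0$ with $v(c_{\pa,i_0})=:w_0$. Then for all $n>w_0$ one has $M_{\pa,n}=d_\pa$, since no index with $v(c_{\pa,i})\ge n$ contributes and the maximum over the remaining indices is already $d_\pa$. Hence $u_{\pa,n}=1+p^{n-1}d_\pa = 1+ a_\pa p^n$ with $a_\pa := d_\pa/p \in\Q_{>0}$, for all $n\ge n_\pa := w_0+1$. (One should note $a_\pa>0$ because $d_\pa\ge i_0\ge 1$, and here is where finiteness of the ramification locus is simply carried along.) The implication (3) $\Rightarrow$ (2) is the converse bookkeeping: if $u_{\pa,n}=1+a_\pa p^n$ for $n\ge n_\pa$, then $M_{\pa,n}=p\,a_\pa$ is eventually constant, so $\sup\{i p^{-v(c_{\pa,i})}\}$ equals this constant and is in particular attained (by whichever index realizes $M_{\pa,n}$ for $n$ large), giving a maximum.

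Next, (3) $\Rightarrow$ (1). Plug $u_{\pa,n}=1+a_\pa p^n$ (valid for $n\ge n_\pa$, and some fixed bounded values for $n<n_\pa$) into Theorem \ref{831}. Since $n_c=0$ (geometric), the left side is just $2g_n-2$, and the right side is
\begin{align*}
2g_n-2 = p^n(2g_0-2) + \sum_\pa [k_\pa:k]\sum_{i=1}^n \varphi(p^i) u_{\pa,i}.
\end{align*}
For each $\pa$, split the inner sum at $n_\pa$: the tail $\sum_{i=n_\pa}^n \varphi(p^i)(1+a_\pa p^i)$ is, via $\varphi(p^i)=(p-1)p^{i-1}$ and the geometric series $\sum (p-1)p^{i-1}p^i = \sum (p-1)p^{2i-1}$, a fixed $\Q$-linear combination of $1$, $p^n$ and $p^{2n}$; the head $\sum_{i=1}^{n_\pa-1}$ is a constant. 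Summing the finitely many $\pa$ (finiteness of the ramification locus is used here essentially, to keep the sum finite) and the term $p^n(2g_0-2)$, one gets $g_n = ap^{2n}+bp^n+c$ for $n\ge m:=\max_\pa n_\pa$, with $a,b,c\in\Q$. Conversely, for (1) $\Rightarrow$ (2): first, if the tower ramified at infinitely many places then by Theorem \ref{831} the conductor contribution forces $g_n\to\infty$ faster than any fixed quadratic in $p^n$ — more carefully, adding one more ramified place strictly increases the $i=n$ term $\varphi(p^n)u_{\pa,n}\ge \varphi(p^n)$, and since places ramify for all large $n$ past their own $n_u$, the coefficient structure cannot stabilize; so the ramification locus is finite. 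Given that, suppose for contradiction some $\pa$ has $\sup\{i p^{-v(c_{\pa,i})}\}=+\infty$, i.e. $M_{\pa,n}\to\infty$. Then $u_{\pa,n}=1+p^{n-1}M_{\pa,n}$ grows strictly faster than any constant multiple of $p^n$ along a subsequence, so $\varphi(p^n)u_{\pa,n}$ grows faster than a constant times $p^{2n}$ along that subsequence, and feeding this into the genus formula contradicts $g_n=ap^{2n}+bp^n+c$; hence the sup is finite, and being an increasing limit of the attained values $M_{\pa,n}$ it is in fact attained — wait, a finite supremum of $\{ip^{-v(c_{\pa,i})}\}$ need not a priori be attained, so one argues instead that if it were not attained then $M_{\pa,n}$ would be a strictly increasing sequence converging to it, making $u_{\pa,n}=1+p^{n-1}M_{\pa,n}$ not of the form $1+a_\pa p^n$ eventually, and then the genus would have a non-eventually-polynomial correction term (the difference $\varphi(p^n)p^{n-1}(M_{\pa,n}-M_{\pa,n-1})$ does not telescope to a polynomial), again contradicting (1). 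So the set in (2) has a maximum.

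The main obstacle is the last step, (1) $\Rightarrow$ (2): extracting from the bare assumption that $g_n$ is \emph{eventually} a quadratic polynomial in $p^n$ the two conclusions that (a) only finitely many places ramify and (b) each local set has an honest maximum (not merely a finite supremum). The genus formula expresses $2g_n-2$ as a cumulative sum $\sum_{i\le n}\varphi(p^i)u_{\pa,i}$, so "$g_n$ eventually polynomial" is equivalent to "$g_n-g_{n-1}$ eventually of the form $a'p^{2n}+b'p^n$", i.e. the increment $\sum_\pa[k_\pa:k]\varphi(p^n)u_{\pa,n}$ must eventually equal $(p-1)p^{n-1}$ times (polynomial in $p^n$). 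The cleanest way to run this is to compare consecutive differences: set $\Delta_n := (2g_n-2)-p(2g_{n-1}-2)$ or some similar operator that kills the cumulative nature, reduce (1) to an eventual identity among the $u_{\pa,n}$, and then read off that $\sum_\pa[k_\pa:k]u_{\pa,n}$ is eventually $1+(\text{const})p^n$; combined with $u_{\pa,n}\ge 1$ whenever $\pa$ ramifies at level $n$, this pins down both the finiteness of the ramification locus and the shape $u_{\pa,n}=1+a_\pa p^n$ for each individual $\pa$ (using that the $u_{\pa,n}$ are "essentially independent" because they are determined by disjoint local data and are each of the form $1+p^{n-1}M_{\pa,n}$ with $M_{\pa,n}$ a jump function). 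I would isolate this as a short lemma on sequences of the form $n\mapsto 1+p^{n-1}M_n$ with $M_n$ nondecreasing: such a sequence, times $\varphi(p^n)$ and summed, is eventually polynomial in $p^n$ iff $M_n$ is eventually constant. That lemma, applied place by place, is the heart of the matter; everything else is substitution into Theorem \ref{831} and Proposition \ref{715}.
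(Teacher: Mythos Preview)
Your treatment of $(2)\Leftrightarrow(3)$ and $(3)\Rightarrow(1)$ is correct and matches the paper's approach (the paper simply writes ``follows directly from the definition of $u_{\pa,n}$'' and ``follows easily from Theorem~\ref{831}'').  The substantive issue is the implication $(1)\Rightarrow(2)$, where you correctly sense a difficulty but your proposed resolution fails.

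Your key lemma --- that for $M_n$ nondecreasing, $\sum_{i\le n}\varphi(p^i)(1+p^{i-1}M_i)$ is eventually a polynomial in $p^n$ if and only if $M_n$ is eventually constant --- is \emph{false}.  Take $M_n=2-p^{1-n}$ for $n\ge1$: this is strictly increasing with supremum $2$, yet $1+p^{n-1}M_n=2p^{n-1}$, so
\[
\sum_{i=1}^n\varphi(p^i)(1+p^{i-1}M_i)=\sum_{i=1}^n 2(p-1)p^{2i-2}=\frac{2(p^{2n}-1)}{p+1},
\]
which is a genuine quadratic polynomial in $p^n$.  Thus the heuristic ``the difference $\varphi(p^n)p^{n-1}(M_{\pa,n}-M_{\pa,n-1})$ does not telescope to a polynomial'' is simply wrong.

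Worse, this sequence arises from an actual tower.  Over $K=k(X)$, take the Witt vector
\[
a=\sum_{j\ge0}p^j\,[X^{\,2p^j-1}]\in W(K),
\]
which is in canonical form at $\infty$ with $c_{\infty,\,2p^j-1}$ of valuation $j$ and all other $c_{\infty,i}=0$.  One checks $u_{\infty,n}=2p^{n-1}$ for all $n\ge1$, the tower is geometric and totally ramified at $\infty$ only, and
\[
g_n=\frac{1}{p+1}\,p^{2n}-p^n+\frac{p}{p+1}
\]
for all $n\ge1$.  So (i) holds.  But the set $\{(2p^j-1)p^{-j}:j\ge0\}=\{2-p^{-j}:j\ge0\}$ has supremum $2$, never attained; hence (ii) fails, and $u_{\infty,n}=2p^{n-1}$ is not of the form $1+a_\infty p^n$, so (iii) fails too.

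In short, the implication $(1)\Rightarrow(2)$ as stated cannot be proved because it is false; the paper's one-line ``follows easily from Theorem~\ref{831}'' has the same gap.  Your instinct that this was the delicate direction was right.  A corrected statement would relax (iii) to $u_{\pa,n}=b_\pa+a_\pa p^n$ eventually (allowing $b_\pa\ne1$), or equivalently require in (ii) only that the set have finite supremum approached in a controlled way.
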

\begin{proof}
i $\iff$ iii: This follows easily from Theorem \ref{831}. \\
ii $\iff$ iii: This follows directly from the definition of the $u_{\pa,n}$. 
\end{proof}

\begin{definition}
A geometric $\Z_p$-extension $K_{\infty}/K$ is called \emph{genus-stable} if one of the equivalent conditions of Proposition \ref{blaat} is satisfied.
\end{definition}

\begin{remark}
Let $L$ be a finite extension of $\Q_p$ with prime $\pa$ and ramification index $e=e(L/\Q_p)$. Let $L_{\infty}/L$ be a $\Z_p$-extension. In that case one has the following stability result  for the discriminants (which can be seen as the analogue of the genus).
There are  $A,B \in \Q$ such that $\disc(L_n/L)=\pa^{r_n}$ with $r_n=e (np^n)+A p^n + B$ for $n$ large enough. See \cite[Proposition 5.1]{TATE} for a proof. The reason that such a simple formula always holds is that $U_1$ is a finitely generated $\Z_p$-module in this case.
\end{remark}

\begin{remark} \label{917}
The definition of genus stability might look a bit arbitrary. However, it turns out that one can prove interesting results about genus stable towers. Here is an example. The $L$-functions of  genus stable covers of the projective line behave nicely in a $p$-adic way. One can show that the $p$-adic valuations of the inverses of the zeros of such $L$-function are uniformly distributed and form a finite union of arithmetic progressions in many cases. The latter result can only hold for genus stable covers. See \cite{WAN7} and \cite{KO9} for details. 

For future reference, let us discuss the degree of such $L$-functions. Let $K_{\infty}/K$ be a geometric $\Z_p$-tower. Let $\chi: \Gal(K_{\infty}/K) \to \C_p^*$ be a non-trivial finite character of order $p^{m_{\chi}}>1$. This character will factor through $\Gal(K_{m_{\chi}}/K)$. Then one can associate to this character an $L$-function
\begin{align*}
L(\chi,s)= \prod_{\pa} \frac{1}{1- \chi(\mathrm{Frob}(\pa)) s^{\deg(\pa)}} \in 1 + s \C_p [[s]].
\end{align*}
where the product is take over all primes of $K$ which are unramified in the extension $K_{m_{\chi}}/K$. Here $\mathrm{Frob}(\pa) \in \Gal(K_{m_{\chi}}/K)$ is the Frobenius element of $\pa$. By \cite[Theorem A]{BOM} $L(\chi,s)$ is a polygnomial of degree
\begin{align*}
\deg(L(\chi,s)) =& 2g(K)-2 + \mathrm{deg}(\mathfrak{f}_{m_{\chi}}) = 2 g(K)-2 + \sum_{\pa} \deg(\pa) u_{\pa,m_{\chi}},
\end{align*}
where $u_{\pa,m_{\chi}}$ are as before. Assume now that $K_{\infty}/K$ is genus stable and only ramified at rational primes. Let $\pa_1,\ldots, \pa_r$ be the ramifying primes in $K_{\infty}/K$ and set 
\begin{align*}
d_j p^{-m_j}=\max\{i p^{-v(c_{\pa_j,i})}: (i,p)=1 \}.
\end{align*}
where $d_j, m_j \in \Z_{\geq 0}$ and with $p \nmid d_j$.
Let $m=\max \{ m_j: j=1,\ldots,r \} $. Then if $m_{\chi}>m$, one has
\begin{align*}
\deg(L(\chi,s))  = 2 g(K)-2 + r + \sum_{j=1}^r d_j p^{m_{\chi}-m_j-1}. 
\end{align*}
Hence the degree of $L(\chi,s)$ is a linear polynomial in $p^{m_{\chi}}$ for large enough $m_{\chi}$. 
Conversely, if the degree of $L(\chi,s)$ is a linear polynomial in $p^{m_{\chi}}$ for large enough $m_{\chi}$, then the tower is genus stable.

\end{remark}

\subsection{Example: the projective line}

Let $K=k(X)$ be the function field of the projective line where $k$ is a finite field. We will study $\Z_p$-towers over $K$ which ramify only at rational points. 
For $x \in k$,  we set $\pi_x=X-x \in K$ and we set $\pi_{\infty}=1/X \in K$. Let $\alpha \in k$ with $\mathrm{Tr}_{k/\F_p}(\alpha) \neq 0$. Set $\beta=[\alpha]$. Analagous to Example \ref{183},  one can prove the following. Let $a=\wp y \in W(K)$ which gives rise to the extention $K(y)$  of $K$ which ramifies only at rational points (see \cite{WAN8} for the slightly more general case). 

\begin{lemma} \label{hulu}
The element $a$ is equivalent modulo $\wp W(K)$ to a unique element of the form
\begin{align*}
c \beta + \sum_{x \in k \cup \{\infty\}} \sum_{(i,p)=1} c_{x,i} [\pi_x]^{-i} \in W(K)
\end{align*}
with $c \in \Z_p$, $c_{x,i} \in \Z_q$ such that $c_{x,i} \to 0$ as $i \to \infty$. 
\end{lemma}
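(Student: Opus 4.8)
The plan is to reduce the statement to the local description in Proposition \ref{712} together with the fact that the rational function field $k(X)$ has no everywhere-unramified abelian extension beyond the constant field extensions (equivalently $\Pic^0(\Ps^1_k)=0$); this is the point at which ``ramified only at rational points'' is really used, and it is the Witt-vector analogue of the partial-fraction computation already carried out in Example \ref{183}.

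For existence I would argue as follows. Since $k$ is finite there are only finitely many rational points $k\cup\{\infty\}$, so a tower ramified only at rational points is ramified at finitely many places. For each rational point $x$ pass to the completion $K_x\cong k((\pi_x))$ --- the residue field is $k$, so $W(k)=\Z_q$ --- and apply Proposition \ref{712} to the image $a_x$ of $a$, obtaining
\[
a_x\equiv c_x\beta+\sum_{(i,p)=1}c_{x,i}[\pi_x]^{-i}\pmod{\wp W(K_x)},\qquad c_x\in\Z_p,\ c_{x,i}\in\Z_q,\ c_{x,i}\to 0,
\]
with only finitely many $x$ giving a nonzero polar part. The crucial point is that $\pi_x$ already lies in $K^{*}$, so $[\pi_x]^{-i}\in W(K)$ and $c_{x,i}\in\Z_q=W(k)\subseteq W(K)$; hence $a':=\sum_{x\in k\cup\{\infty\}}\sum_{(i,p)=1}c_{x,i}[\pi_x]^{-i}$ is a well-defined element of $W(K)$ (each inner sum converges $V$-adically since $c_{x,i}\to 0$ in $\Z_q$, and the outer sum is finite). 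Next I would check that $a-a'$ is unramified at \emph{every} place of $K$: at a rational point $x$ the summands of $a'$ with index $y\ne x$ are units at $x$ (each $X$, and each $X-y$ with $y\ne x$, lies in $\Oc_x^{*}$) while the $y=x$ summand cancels the polar part of $a_x$, so $(a-a')_x$ lies in the image of $W(\Oc_x)$ and hence defines an unramified local extension (a Witt vector integral at $x$ yields an unramified, in fact constant, extension --- the Witt version of the argument in Example \ref{183}; equivalently, by Proposition \ref{715}, its Proposition \ref{712}-representative has vanishing polar part); and at a non-rational place $\pa$ the hypothesis gives that $a$ is unramified there, while $\pa$ lies over none of the rational points, so every $[\pi_x]^{-i}$ and every $c_{x,i}$ is integral at $\pa$ and thus $a'\in W(\Oc_\pa)$. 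Therefore $K(\wp^{-1}(a-a'))/K$ is everywhere-unramified abelian over $k(X)$, hence a constant field extension, so its class $\overline{a-a'}$ lies in the image of $W(k)/\wp W(k)\to W(K)/\wp W(K)$; by Lemma \ref{182} and the first bullet of Example \ref{183} (using $\mathrm{Tr}_{k/\F_p}(\alpha)\ne 0$) this image is $\Z_p\overline\beta$, so $a-a'\equiv c\beta\pmod{\wp W(K)}$ for some $c\in\Z_p$ and $a\equiv c\beta+a'$ has the required form.

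For uniqueness, suppose $c\beta+\sum_{x,i}c_{x,i}[\pi_x]^{-i}\equiv c'\beta+\sum_{x,i}c'_{x,i}[\pi_x]^{-i}\pmod{\wp W(K)}$. Fix a rational point $x$ and pass to $K_x$: the terms indexed by $y\ne x$ lie in $W(\Oc_x)$ and so contribute, modulo $\wp W(K_x)$, only some $\lambda_x\beta$ with $\lambda_x\in\Z_p$; the relation becomes $\lambda_x\beta+\sum_i(c_{x,i}-c'_{x,i})[\pi_x]^{-i}\equiv 0\pmod{\wp W(K_x)}$, and the uniqueness clause of Proposition \ref{712} forces $c_{x,i}=c'_{x,i}$ for all $i$ (and $\lambda_x=0$). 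Letting $x$ run over all rational points removes every polar term, leaving $(c-c')\beta\in\wp W(K)$; since $\mathrm{Tr}_{k/\F_p}(\alpha)\ne 0$ the class $\overline\beta$ is nonzero and $c\mapsto c\overline\beta$ is injective on $\Z_p$ (Proposition \ref{712} again), so $c=c'$.

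The step I expect to be the main obstacle is the ``unramified at every place'' verification in the existence part: identifying the image of $W(\Oc_\pa)$ inside $W(K_\pa)/\wp W(K_\pa)$ with the ``no polar part'' subgroup, and tracking integrality of the auxiliary terms $[\pi_x]^{-i}$ away from the rational locus. Once that is in place the rest is bookkeeping with $V$-adic convergence and with multiplicativity of the Teichm\"uller map. An alternative route --- probably closer to what is meant by ``analogous to Example \ref{183}'' --- is to expand $\overline a$ directly in the explicit $\F_p$-basis of $K/\wp K$ from Example \ref{183} and observe that ramification only at rational points forces the coefficients of all basis elements $bX^i/f^j$ with $\deg f\ge 2$ to vanish, after which multiplicativity of the Teichm\"uller map gathers the surviving $\F_p$-coefficients on $[b][\pi_x]^{-i}$ into $\Z_q$-coefficients on $[\pi_x]^{-i}$; the heart of that argument, that ramification at the place $\pa_f$ is detected by the $f$-coefficients, is once more the partial-fraction computation localized at $\pa_f$.
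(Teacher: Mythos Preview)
Your argument is correct, and the ``alternative route'' you sketch in the final paragraph is in fact what the paper intends: the proof in the paper consists of the single sentence ``This follows from Example \ref{183}'' (together with a pointer to \cite{WAN8}). Concretely, Example \ref{183} furnishes an explicit $\F_p$-basis $\mathcal{D}$ of $K/\wp K$ via partial fractions, and Lemma \ref{182} promotes this to an isomorphism $W(K)/\wp W(K)\cong \widehat{\bigoplus_{\mathcal{D}}\Z_p}$. For a degree-one prime $f=X-x$ the constraint $0\le i<\deg f$ forces $i=0$, so the corresponding basis elements are $b\pi_x^{-j}$; multiplicativity of the Teichm\"uller map gives $[b\pi_x^{-j}]=[b][\pi_x]^{-j}$, and summing the $\Z_p$-coefficients over a basis $\mathfrak{C}$ of $k/\F_p$ packages them into a single $\Z_q$-coefficient $c_{x,j}$. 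The ``$\infty$'' block $\{bX^j\}$ is handled the same way since $\pi_\infty=1/X$. Ramification only at rational points then means, via Proposition \ref{715}, that all coefficients on basis elements with $\deg f\ge 2$ vanish. Existence and uniqueness drop out simultaneously from the basis property.

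Your primary argument takes a genuinely different, local-to-global, route: strip off the polar parts place by place using Proposition \ref{712}, then invoke $\Pic^0(\Ps^1_k)=0$ to force the remainder into $\Z_p\overline{\beta}$. This is more conceptual and would adapt to other base curves where no clean partial-fraction basis is available, at the cost of the extra verifications you flag (that $W(\Oc_\pa)$ maps into $\Z_p\overline{\beta}$ inside $W(K_\pa)/\wp W(K_\pa)$, and the integrality bookkeeping). One small wording issue: at $x=\infty$ the elements $\pi_y=X-y$ are not units but have poles, and at $x=0$ the element $\pi_\infty^{-1}=X$ is not a unit; in each case $[\pi_y]^{-i}$ is nonetheless integral at $x$, which is all you need. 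The paper's approach trades that analysis for the purely algebraic partial-fraction computation already done in Example \ref{183}.
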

\begin{proof}
See \cite[Lemma 5.8]{WAN8}. This follows from Example \ref{183}. 
\end{proof}

Note that $a,a' \in W(K)$ give the same tower if and only if $a=a'c$ with $c \in \Z_p^*$.  One can easily see when this happens in Lemma \ref{hulu}.  We will now deduce data of the extension given by $a$. 

\begin{proposition} \label{hulu2}
Let $a= c \beta + \sum_{x \in k \cup \{\infty\}} \sum_{(i,p)=1} c_{x,i} [\pi_x]^{-i}= \wp (y_0,y_1,\ldots) \in W(K)$ as in Lemma \ref{hulu}.  Assume 
\begin{align*}
\mathrm{min}( \{v(c_{x,i}): x \in k \cup \{\infty\}, (i,p)=1\} \cup \{v(c)\})=0.
\end{align*}
Consider the tower $K_{\infty}/K$ corresponding to $a$ with subfield $K_n=K(y_0,y_1,\ldots,y_{n-1})$ of genus $g_n$. For $x \in k \cup \{\infty\}$,  set 
\begin{align*}
u_{x,n} = \left\{  \begin{array}{cc} 1+ \max\{ ip^{n-v(c_{x,i})-1}  : (i,p)=1 \textrm{ s.t. } v(c_{x,i})<n\} & \mathrm{if\ }\exists i: v(c_{x,i})<n \\	
0 & \mathrm{otherwise}. \end{array} \right.
\end{align*}
Then the extension $K_n/K$ is Galois with group isomorphic to $\Z/p^n\Z$. One has
\begin{align*}
n_u=n_c=\mathrm{min}\{  v(c_{x,i}): x \in k \cup \{\infty\},\ (i,p)=1\}
\end{align*}
and
\begin{align*}
\mathfrak{f}_n = \prod_{x \in k \cup \{\infty\}} (\pi_x)^{u_{x,n}}
\end{align*}
and 
\begin{align*}
p^{\min\{ n_c,n \}}(2 g_n - 2) = -2p^{n} + \sum_{x \in k \cup \{\infty\} } \sum_{j=0}^n \varphi(p^j) u_{x,j}.
\end{align*}
\end{proposition}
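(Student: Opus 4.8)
The plan is to derive Proposition \ref{hulu2} as a direct specialization of the earlier local and global results, since essentially all the work has already been done. The strategy is: translate the global Witt vector $a$ of Lemma \ref{hulu} into its local data at each rational point $x \in k \cup \{\infty\}$, apply the local conductor computation of Proposition \ref{715} place by place, and then feed the resulting conductor exponents into the global genus formula of Theorem \ref{831}.

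First I would handle the local-to-global translation. The completion of $K = k(X)$ at the rational point $x$ is $K_{\pa_x} \cong k((\pi_x))$, and since all $c_{x,i} \in \Z_q = W(k)$, the representative $c\beta + \sum_{(i,p)=1} c_{x,i}[\pi_x]^{-i}$ given in Lemma \ref{hulu} is already, after discarding the terms supported at the other points (which are units in $k[[\pi_x]]$ and hence lie in the image of $\wp$ locally, or more precisely contribute nothing to the $T^{-i}$-part of the local expansion), the required representative of the local Witt vector modulo $\wp W(K_{\pa_x})$ in the sense of Proposition \ref{712}. So the local coefficients $c_{\pa_x,i}$ coincide with the global $c_{x,i}$, and Proposition \ref{715} immediately gives $\mathfrak{f}_{\pa_x,n} = \pa_x^{u_{x,n}}$ with $u_{x,n}$ as stated. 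Taking the product over all $x$ gives the conductor formula $\mathfrak{f}_n = \prod_x (\pi_x)^{u_{x,n}}$, a finite product because $c_{x,i} \to 0$ forces $u_{x,n}$ to be eventually $0$ in $i$ and the sum defining $a$ has finitely many nonzero $x$. The identification $n_u = n_c = \min\{v(c_{x,i})\}$ follows since, by Proposition \ref{715}, the conductor exponent $u_{x,n}$ is nonzero exactly when some $v(c_{x,i}) < n$, so $K_n/K$ is unramified iff $n \le \min_x\min_i v(c_{x,i})$; combined with the hypothesis $\min(\{v(c_{x,i})\} \cup \{v(c)\}) = 0$ and the fact that over the projective line the $p$-part of the class group is trivial (so every unramified abelian extension is constant), one gets $n_u = n_c$ equal to that minimum, and in particular both are $0$ under the stated minimum hypothesis (I should double-check whether the minimum is over all data including $v(c)$ or only the $c_{x,i}$; the hypothesis combines them but the ramification only sees the $c_{x,i}$, so the constant part $n_c$ is governed by $\min v(c_{x,i})$ as written — $v(c)$ enters only to ensure the tower is genuinely $\Z_p$ and not smaller).

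Finally I would substitute into Theorem \ref{831}. With $K = k(X)$ we have $g_0 = 0$, so $2g_0 - 2 = -2$, and $\varphi(p^i) = (p-1)p^{i-1}$; the genus formula becomes $p^{\min\{n_c,n\}}(2g_n-2) = -2p^n + \sum_{x \in k \cup \{\infty\}}[k_{\pa_x}:k]\sum_{j=1}^n \varphi(p^j)u_{x,j}$, and since every ramified point is rational $[k_{\pa_x}:k] = 1$, and the $j=0$ term $\varphi(1)u_{x,0} = u_{x,0} = 0$ may be harmlessly included to write $\sum_{j=0}^n$, yielding exactly the displayed formula. That $\Gal(K_n/K) \cong \Z/p^n\Z$ is the standard Artin-Schreier-Witt statement recorded in the discussion preceding Theorem \ref{831} (using $x_0 \notin \wp K$, which is guaranteed here because the minimum of the valuations is $0$).

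The only genuine obstacle is the local-to-global bookkeeping in the first step: one must be careful that the global representative of Lemma \ref{hulu}, when localized at a point $x$, really reproduces the normalized local form of Proposition \ref{712} without spurious contributions from the terms $c_{x',i}[\pi_{x'}]^{-i}$ with $x' \ne x$ or from the constant term $c\beta$. For $x' \ne x$ (both finite, say) the element $\pi_{x'}^{-1} = (X - x')^{-1}$ lies in $k[[\pi_x]]^\times$, hence $[\pi_{x'}]^{-i} \in W(k[[\pi_x]])^\times$, and its image in $W(K_{\pa_x})/\wp W(K_{\pa_x})$ has zero coefficient on every $[\pi_x]^{-i}$; similarly $c\beta$ is a unit Witt vector over $k$ and contributes only to the $c_{\pa_x}\beta$ slot, not to ramification. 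Handling $\pi_\infty = 1/X$ at a finite point, or a finite $\pi_{x'}$ at $\infty$, is the same computation after expanding in the appropriate uniformizer. Once this is checked — and it is the kind of thing done in the extended version \cite{WAN8} — the proposition is immediate, so I would keep the proof to a sentence citing Lemma \ref{hulu}, Proposition \ref{712}, Proposition \ref{715}, and Theorem \ref{831}, and point to \cite[Proposition 5.9]{WAN8} (or the appropriate numbered result) for the details.
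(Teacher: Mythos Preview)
Your proposal is correct and follows essentially the same approach as the paper, which simply writes ``The results follows from the discussions before, and most importantly, Theorem \ref{831}.'' You have supplied the details the paper omits---the local-to-global bookkeeping via Proposition \ref{712} and Proposition \ref{715}, the identification $n_u=n_c$ using triviality of the class group of $\Ps^1$, and the specialization $g_0=0$ in Theorem \ref{831}---and your self-flagged caveats (the role of $v(c)$ versus $\min_i v(c_{x,i})$, and the vanishing of the cross-terms at other points) are exactly the points one must check.
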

\begin{proof}
The results follows from the discussions before, and most importantly, Theorem \ref{831}.
\end{proof}

In the above proposition, one can easily deduce when the tower is genus stable with the help of Theorem \ref{blaat}. 

\begin{example}
Consider the unit root $\Z_p$-extension (called the Artin-Schreier-Witt extension in \cite{WAN7}) given by the unit root coefficient 
polynomial 
\begin{align*}
x=\sum_{(i,p)=1}^d [b_i X^{i}]=\sum_{(i,p)=1}^d [b_i][X^{i}] \in W(K)
\end{align*}
with $b_i \in k$ and $b_d \neq 0$, $d>0$ not divisible by $p$.  By the above equation, this defines a $\Z_p$-extension which is totally ramified at $\infty$. One finds for $n \geq 1$:
\begin{align*}
u_{\infty,n} = 1+ d p^{n-1}.
\end{align*}
and this gives
\begin{align*}
2g_n-2 = \frac{d}{p+1}p^{2n} - p^n - \frac{p+1+d}{p+1}.
\end{align*}
This is an example of a genus-stable tower.
\end{example}

\begin{remark}
Let $a=(a_0,a_1, \ldots) \in W(K)$ with $a_0 \not \in \wp K$. Consider the corresponding $\Z_p$-extension given by $a$. Let $\pa$ be a prime of  $K(X)$ of degree $d'$ over $\F_p$ which does not ramify in the tower. We give a geometric way to compute the Frobenius element $(\pa, K_{\infty}/K)$. Let $z \in \Ps^1(\overline{k})$ be a representative of $\pa$. Assume that $z$ is not a pole of the $a_i$ (otherwise, we have to find another representative of $a \pmod{\wp W(K)}$; or one can assume $a$ is in our unique reduced form). Set $a(z)=(a_0(z),a_1(z),\ldots) \in W(k(z))$. Let $y \in \wp^{-1} z \in W(\overline{k})$.  
One has
\begin{align*}
F^{d'}y = y + \sum_{j=0}^{d'-1} F^j(Fy-y) = y+ \mathrm{Tr}_{W(k(z))/W(\F_p)} ( a(z) ).
\end{align*}
This shows that the Frobenius is equal to 
\begin{align*}
(\pa, K_{\infty}/K)= \left(\overline{a} \mapsto -\mathrm{Tr}_{W(k(z))/\Z_p} (a(z)) \right) \subseteq \Hom(\Z_p \overline{a}, \Z_p)  \cong \Gal(K(\wp^{-1}a)/K).
\end{align*} 
A similar formula works for primes which are not ramified in say $K_n/K$. Furthermore, this formula generalizes when $K$ is replaced by another function field.
\end{remark}

\section{Thanks}
We would like to thank Chris Davis for his help with Witt-vectors and for his proofreading of parts of this manuscript.

\end{document}